\documentclass[12pt]{article}

\usepackage{enumitem}

\flushbottom

\usepackage{amssymb, amsmath, amsthm}
\usepackage[all]{xy}

\usepackage{mathptmx}
\usepackage[scaled=.90]{helvet}
\usepackage{courier}

\usepackage{datetime}

\parindent0pt 
\parskip10pt

\sloppy

\setcounter{secnumdepth}{2}

\lefthyphenmin=3
\righthyphenmin=3

\clubpenalty=10000
\widowpenalty=10000


\usepackage[pdftex]{hyperref}

\hypersetup{
  colorlinks = true,
  linkcolor = black,
  urlcolor =  blue,
  citecolor = black,
}


\renewcommand{\phi}{\varphi}


\newcommand{\bbZ}{\mathbb{Z}}

\newcommand{\rmA}{\mathrm{A}}

\newcommand{\rmC}{\mathrm{C}}
\newcommand{\rmE}{\mathrm{E}}
\newcommand{\rmF}{\mathrm{F}}

\newcommand{\rmI}{\mathrm{I}}

\newcommand{\rmQ}{\mathrm{Q}}
\newcommand{\rmS}{\mathrm{S}}

\newcommand{\bfC}{\mathbf{C}}

\newcommand{\bfG}{\mathbf{G}}
\newcommand{\bfI}{\mathbf{I}}
\newcommand{\bfK}{\mathbf{K}}
\newcommand{\bfQ}{\mathbf{Q}}
\newcommand{\bfR}{\mathbf{R}}
\newcommand{\bfS}{\mathbf{S}}
\newcommand{\bfT}{\mathbf{T}}

\newcommand{\ab}{\text{\upshape ab}}

\DeclareMathOperator{\pr}{pr}
\DeclareMathOperator{\id}{id}

\DeclareMathOperator{\GL}{GL}
\DeclareMathOperator{\Ab}{Ab}
\DeclareMathOperator{\Aut}{Aut}

\DeclareMathOperator{\Der}{Der}

\DeclareMathOperator{\Hom}{Hom}


\newtheorem{theorem}{Theorem}
\newtheorem*{thm-}{Theorem}
\newtheorem{proposition}[theorem]{Proposition}

\theoremstyle{definition}
\newtheorem*{def-}{Definition}
\newtheorem{definition}[theorem]{Definition}
\newtheorem{example}[theorem]{Example}
\newtheorem{examples}[theorem]{Examples}
\newtheorem{remark}[theorem]{Remark}

\numberwithin{theorem}{section}
\numberwithin{equation}{section}
\numberwithin{figure}{section}


\title{Alexander--Beck modules detect the unknot}

\author{Markus Szymik}

\newdateformat{mydate}{\monthname~\twodigit{\THEYEAR}}
\date{\mydate\today}

\begin{document}

\maketitle

\renewcommand{\abstractname}{}

\vspace{-2\baselineskip}

\begin{abstract}
\noindent 
We introduce the Alexander--Beck module of a knot as a canonical refinement of the classical Alexander module, and we prove that this new invariant is an unknot-detector.

\vspace{\baselineskip}
\noindent MSC: 
57M27 	
(20N02, 
18C10)  

\vspace{\baselineskip}
\noindent Keywords:
Alexander modules, Beck modules, knots, quandles
\end{abstract}


\section*{Introduction}

One of the most basic and fundamental invariants of a knot~$K$ inside the~$3$--sphere is the knot group~$\pi K$, the fundamental group of the knot complement. Any regular projection~(i.e.,~any~`diagram') of the knot gives rise to a presentation of the group~$\pi K$ in terms of generators and relations, the Wirtinger presentation. As a consequence of Papakyriakopoulos' work~\cite{Papakyriakopoulos}, the knot group~$\pi K$ detects the unknot, but many pairs of distinct knots have isomorphic knot groups. In addition to this, groups are also somewhat complicated objects to manipulate, because of their non-linear nature. There are, therefore, more than enough reasons to look for other knot invariants. 

It turns out that the knot complement is a classifying space of the knot group, see~\cite{Papakyriakopoulos} again. As a consequence, the homology of the group is isomorphic to the homology of the complement. By duality, this homology~(and even the stable homotopy type) is easy to compute, and independent of the knot. Therefore, homology and other abelian invariants of groups and spaces do not give rise to exciting knot invariants unless we also find a way to refine the strategy to some extent. 

For instance, we can extract the Alexander polynomial of a knot from the homology of the canonical infinite cyclic cover of the knot complement. However, many knots have the same Alexander polynomial as the unknot. Such examples include
the Seifert knot~\cite{Seifert}, 
all untwisted Whitehead doubles~\cite{Whitehead} of knots,
the Kinoshita--Terasaka knot~\cite{Kinoshita+Terasaka}, and
the Conway knot~\cite{Conway}. See also~\cite{Garoufalidis+Teichner}.~(At the time of writing, the corresponding problem for the Jones polynomial appears to be open.) This state of affairs may suggest that groups and the invariants derived from their abelianizations are not the most efficient algebraic means to provide invariants of knots. 

Building on fundamental work~\cite{Waldhausen} of Waldhausen, Joyce~\cite{Joyce} and Mat\-veev~\cite{Matveev} have independently shown that there is an algebraic structure that gives rise to a complete invariant of knots~$K$: the knot quandle~$\rmQ K$. As with the knot group, we can describe it in terms of paths in the knot complement, and we can present it using any of the knot's diagrams. 

The knot quandle functorially determines the knot group, and the classical Alexander module of a knot~$K$ has a comeback in the~(absolute) abelianization of the knot quandle~$\rmQ K$. 

In this paper, we use a relative version of the abelianization functor that goes back to Beck~\cite{Beck} to introduce a refinement of the classical Alexander module. The following will reappear as Definition~\ref{def:AB_K}, after an explanation of the terms involved.

\begin{def-}
Let~$K$ be a knot with knot quandle~$\rmQ K$. The {\em Alexander--Beck module} of~$K$ is the value  at the terminal object of the left adjoint of the forgetful functor from~$\rmQ K$--modules to quandles over~$\rmQ K$.
\end{def-}

As the name suggests, the Alexander--Beck module of a knot is a linear algebraic object and therefore embedded into a more accessible context than the knot group. We can also compute it from any diagram of the knot.

While knots are classified in theory, as recalled above, through their associated quandles, there is still considerable interest in finding weaker invariants. Of course, we do not like the invariants to be too weak, like homology or the Alexander polynomial. They should at least be unknot-detectors. As we have noted above, the knot group is such an invariant, and in~\cite{KM} this property is established for Khovanov homology, a refinement of the Jones polynomial. We will see that there is no need to introduce homology, because of the main result of this paper, Theorem~\ref{thm:main}:

\begin{thm-}
A knot is trivial if and only if its Alexander--Beck module is free.
\end{thm-}

The outline of this paper is as follows. In the following Section~\ref{sec:abel}, we will review the categories of abelian group objects for algebraic theories. In Section~\ref{sec:abel_quan}, we apply these ideas to the theories of racks and quandles. In Section~\ref{sec:abel_knot}, we specialize this further to the knot quandles and explain how we can interpret the classical Alexander invariants from this point of view. Section~\ref{sec:Beck} contains a review of Beck modules over objects for any algebraic theory. In Section~\ref{sec:beck_quan}, we employ this for the theories of racks and quandles. In Section~\ref{sec:beck_knot}, we specialize again to knot quandles. We introduce the Alexander--Beck modules in Definition~\ref{def:AB_K}, and we prove the main result of this paper, Theorem~\ref{thm:main}. 

Homology and the derived functors of the abelianization are addressed elsewhere~\cite{Szymik:Q=Q}.


\section{Abelian group objects}\label{sec:abel}

Racks and quandles can be studied in the context of (one-sorted) algebraic theories in the sense of Lawvere~\cite{Lawvere}. This was done, for instance, in~\cite{Szymik}. Other more standard examples of such theories are given by the theory of groups, the theory of rings, the theory of sets with an action of a given group~$G$, the theory of modules over a given ring~$A$, the theory of Lie algebras, and not to forget the initial theory of sets.  In this section we review the categories of abelian group objects in algebraic theories.

An algebraic theory is usually given by a small category that codifies the operations and their relations. Then the {\it models} (or {\it algebras}) of that algebraic theory are certain functors from that small category to the large category of sets, see~\cite{Lawvere}. For example, a group $G$ in the traditional sense defines a model for the algebraic theory of groups by means of a functor that sends the free group on $n$ generators to the set $G^n$.

For any algebraic theory, the category~$\bfT$ of its models is large, and all categories in this paper will be of that size. They are complete, cocomplete, and have a `small' and~`projective' generator: a free model on one generator. The class of~`effective epimorphisms' agrees with the class of surjective homomorphisms. We will write~$\bfS$,~$\bfG$,~$\bfR$, and~$\bfQ$ for the category of sets, groups, racks~(see Definition~\ref{def:rack}), and quandles~(see Definition~\ref{def:quandle}), respectively. Whenever we pick a category~$\bfT$ of models for an algebraic theory, the reader is invited to choose any of these for guidance.

There are forgetful functors among these categories that all have left adjoints.
\[
\bfS\longrightarrow
\bfR\longrightarrow
\bfQ\longrightarrow
\bfG
\]
In particular, the left adjoint~$\bfS\to\bfT$ to the forgetful functor sends a set~$S$ to the free model~$\rmF\bfT(S)$ on the set~$S$ of generators. We will write~$\rmF\bfT_n$ if~$S$ is the set~$\{1,\dots,n\}$ with~$n$ elements. 
 
\begin{definition}\label{def:ag}
If~$\bfC$ is a category with finite products, an {\em abelian group object} in~$\bfC$ is an object~$M$ together with operations~$e\colon\star\to M$ (the unit), \hbox{$i\colon M\to M$} (the inverse), and~\hbox{$a\colon M\times M\to M$} (the addition) such that, writing~$e'$ for the composition of~$e$ with the unique map~$M\to\star$, the four diagrams
\[
\xymatrix{
M\times M\times M\ar[r]^-{\id\times a}\ar[d]_{a\times\id}&M\times M\ar[d]^a\\
M\times M\ar[r]_a&M
}
\hspace{1em}
\xymatrix{
M\times M\ar[dr]_a\ar[rr]^{(\pr_2,\pr_1)}&&M\times M\ar[dl]^a\\
&M&
}
\]
\[
\xymatrix{
M\ar[r]^-{(\id,e')}\ar[dr]_\id&M\times M\ar[d]^a&M\ar[l]_-{(e',\id)}\ar[dl]^\id\\
&M&
}
\hspace{1em}
\xymatrix{
M\ar[r]^-{(\id,i)}\ar[dr]_{e'}&M\times M\ar[d]_a&M\ar[l]_-{(i,\id)}\ar[dl]^{e'}\\
&M&
}
\]
commute.
\end{definition}

See Beck's thesis~\cite{Beck} and Quillen's summary in~\cite{Quillen:summary}. 

\begin{remark}\label{rem:abelian}
A useful way of rephrasing the Definition~\ref{def:ag} goes as follows: an abelian group structure on an object~$M$ is a lift of the set-valued pre\-sheaf~\hbox{$C\mapsto\bfC(C,M)$} on~$\bfC$ that is represented by~$M$ to an abelian presheaf, that is to a presheaf that takes values in the category of abelian groups.
\end{remark}

Let~$\bfT$ be the category of models (or algebras) for an algebraic theory. The category~$\Ab(\bfT)$ of abelian group objects in~$\bfT$ is equivalent to the category of models for the tensor product of the given theory with the theory of abelian groups~\cite{Freyd}. The category~$\Ab(\bfT)$ is also equivalent to the category of modules over a ring, the endomorphism ring of its generator. We will denote this ring by~$\bbZ\bfT$.

The category~$\Ab(\bfT)$ of abelian group objects in~$\bfT$ comes with a faithful forgetful functor~$\Ab(\bfT)\to\bfT$, and that functor has a left adjoint
\[
\Omega\colon\bfT\longrightarrow\Ab(\bfT).
\]
This will be referred to as the {\em (absolute) abelianization} functor. 

\begin{example}
If~$\bfT=\bfG$ is the category of groups, then~$\Ab(\bfG)$ is the full subcategory of abelian groups, or~$\bbZ$--modules, so that~$\bbZ\bfG=\bbZ$. The abelianization of a group~$G$ in the abstract sense discussed above is just the abelianization~$G^\ab$ of that group in the sense of group theory: the quotient of~$G$ by its commutator subgroup.
\end{example}

There is a standard recipe to compute the abelianization~$\Omega(X)$, at least in principle. 

\vbox{\begin{proposition}
If the diagram
\[
\xymatrix@1{
X&\rmF\bfT(S)\ar[l]&\rmF\bfT(R)\ar@<+.4ex>[l]\ar@<-.4ex>[l]
}
\]
displays~$X$ as a coequalizer of free objects~$\rmF\bfT(S)$ and~$\rmF\bfT(R)$, then 
there is a diagram
\[
\xymatrix@1{
\Omega(X)&\bbZ\bfT(S)\ar[l]&\bbZ\bfT(R)\ar[l]
}
\]
that displays~$\Omega(X)$ as a cokernel of the difference of the induced maps between free~$\bbZ\bfT$--modules.
\end{proposition}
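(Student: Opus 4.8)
The plan is to combine two facts: that $\Omega$ is a left adjoint, and that its target $\Ab(\bfT)$ is an abelian category equivalent to a category of modules.

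First, since $\Omega\colon\bfT\to\Ab(\bfT)$ is a left adjoint, it preserves all colimits, and coequalizers in particular. Applying $\Omega$ to the given presentation therefore produces a coequalizer diagram
\[
\xymatrix@1{
\Omega(X)&\Omega(\rmF\bfT(S))\ar[l]&\Omega(\rmF\bfT(R))\ar@<+.4ex>[l]\ar@<-.4ex>[l]
}
\]
in $\Ab(\bfT)$.

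Second, I would identify $\Omega(\rmF\bfT(S))$ with the free $\bbZ\bfT$--module $\bbZ\bfT(S)$, and likewise for $R$. The composite $\Omega\circ\rmF\bfT\colon\bfS\to\Ab(\bfT)$ is left adjoint to the composite of the forgetful functors $\Ab(\bfT)\to\bfT\to\bfS$. Under the equivalence between $\Ab(\bfT)$ and the category of $\bbZ\bfT$--modules recorded above, this composite forgetful functor is the underlying-set functor, whose left adjoint is the free-module functor $S\mapsto\bbZ\bfT(S)$. As left adjoints to a fixed functor are unique up to natural isomorphism, this yields the desired identification, naturally in the set of generators.

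Third, I would convert the coequalizer into a cokernel. Because $\Ab(\bfT)$ is equivalent to a category of modules over a ring, it is abelian, and in any abelian category the coequalizer of a parallel pair $(\alpha,\beta)$ is the cokernel of the difference $\alpha-\beta$. Applying this to the two induced maps $\bbZ\bfT(R)\rightrightarrows\bbZ\bfT(S)$ exhibits $\Omega(X)$ as the cokernel claimed in the statement.

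The argument is essentially formal, so I do not anticipate a serious obstacle. If anything, the one step deserving care is the second, where one must check that $\Omega$ sends the free model on a set to the free module on that set; this rests on the uniqueness of adjoints together with the explicit description of $\Ab(\bfT)$ as a module category, both of which are available from the preceding discussion. The remaining ingredients — preservation of colimits by left adjoints and the identity between coequalizers and cokernels of differences — are standard.
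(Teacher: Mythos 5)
Your proposal is correct and follows essentially the same route as the paper: identify $\Omega(\rmF\bfT(S))$ with the free $\bbZ\bfT$--module $\bbZ\bfT(S)$ via the universal property (uniqueness of left adjoints), then use that $\Omega$ preserves coequalizers and that a coequalizer of a parallel pair in an abelian category is the cokernel of the difference. The only cosmetic difference is that you make the adjoint-composition argument and the coequalizer-to-cokernel conversion slightly more explicit than the paper does.
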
}

In order to find such a coequalizer diagram, choose a presentation of~$X$ by generators~$S$ and relations~$R$, or simply take the canonical one, with~$S=X$ and~$R=\rmF\bfT X$.

\begin{proof}
If the model~$X=\rmF\bfT(S)$ happens to be free on a set~$S$, then we would like an abelian model~$\Omega(\rmF\bfT(S))$ together with natural isomorphisms
\[
\Hom_{\Ab(\bfT)}(\Omega(\rmF\bfT(S)),M)\cong\bfT(\rmF\bfT(S),M)\cong\bfS(S,M)
\]
for all abelian models~$M$. There exists a free abelian model on any given set~$S$, because such a model corresponds to a free~$\bbZ\bfT$--module~$\bbZ\bfT(S)$. It is then clear that such a free abelian model solves our problem in this special case, that is when~$X$ is free. 

In general, we assume that the object~$X$ comes presented as a colimit of free objects, and take into account that the functor~$\Omega$, as a left adjoint, has to preserve these. Specifically, we assume that we have a diagram
\[
\xymatrix@1{
X&\rmF\bfT(S)\ar[l]&\rmF\bfT(R)\ar@<+.4ex>[l]\ar@<-.4ex>[l]
}
\]
that displays~$X$ as a coequalizer of free objects~$\rmF\bfT(S)$ and~$\rmF\bfT(R)$.  Then we get a coequalizer diagram
\[
\xymatrix@1{
\Omega(X)&\Omega(\rmF\bfT(S))\ar[l]&\Omega(\rmF\bfT(R))\ar@<+.4ex>[l]\ar@<-.4ex>[l]
}
\]
of abelian models, so that the abelianization~$\Omega(X)$ is the cokernel of the difference of the parallel maps. This presents~$\Omega(X)$ in terms of free abelian models, as desired.
\end{proof}

\begin{remark}
On the level of automorphism groups, abelianization induces homomorphisms
\begin{equation}\label{eq:Burau}
\Aut_{\bfT}(\rmF\bfT_n)\longrightarrow\GL_n(\bbZ\bfT)
\end{equation}
from the automorphism groups of the free models into the general linear groups over the ring~$\bbZ\bfT$. As we will see below in Remark~\ref{rem:Burau}, these can be thought of as generalizations of the Burau representations.
\end{remark}

\begin{remark}
Let~$\bfT$ be a category of models for an algebraic theory, let~$X$ be an object of~$\bfT$, and let~$M$ in~$\Ab(\bfT)$ be an abelian model. The sets
\[
\bfT(X,M)\cong\Hom_{\Ab(\bfT)}(\Omega(X),M)
\]
are actually abelian groups, see Remark~\ref{rem:abelian}. We will write~$\Der(X;M)$ for either of them. The elements are the~{\em derivations} in the sense of Beck. See again~\cite{Beck} and~\cite{Quillen:summary}. There is a universal derivation~$X\to\Omega(X)$, adjoint to the identity.
\end{remark}


\section{Abelian racks and quandles}\label{sec:abel_quan}

In this section we show how the general concepts from the previous section apply to the theory of racks and quandles. Although there is no claim to originality here, there is nevertheless reason to require a reasonably self-contained exposition: we can use it to fix the notation used throughout the text; it is instructive to see the general concepts of the previous section worked out in the case of interest to us; and the exact statements given here cannot be conveniently referenced from the literature.

\begin{definition}\label{def:rack}
A {\em rack}~$(R,\rhd)$ is a set~$R$ together with a binary operation~$\rhd$ such that all left multiplications
\[
R\longrightarrow R,\,y\longmapsto x\rhd y
\]
are automorphisms, i.e.~they are bijective and satisfy
\[
x\rhd(y\rhd z)=(x\rhd y)\rhd(x\rhd z)
\]
for all~$x$,~$y$, and~$z$.
\end{definition}

The invertibility condition can be encoded via another binary operation. See the papers by Brieskorn~\cite{Brieskorn} and Fenn--Rourke~\cite{Fenn+Rourke}.

\begin{definition}\label{def:abelian}
An {\em abelian rack} is an abelian group object in the category~$\bfR$ of racks, or equivalently, a rack object in abelian groups. 
\end{definition}

\begin{remark}\label{rem:abelian_in_Joyce}
This meaning of `abelian' is different from the one in~\cite[Def.~1.3]{Joyce}, where Joyce considers the equation
\[
(w\rhd x)\rhd(y\rhd z)=(w\rhd y)\rhd(x\rhd z).
\]
This equation characterizes `rack objects' in the category~$\bfR$ of racks, rather than abelian group objects: a {\it rack object} in the category of racks is a rack~$R$ such that the binary operation~\hbox{$\rhd\colon R\times R\to R$} is a morphism of racks.
\end{remark}

An abelian rack (as in Definition~\ref{def:abelian}) is a rack~$M$ that is also an abelian group~(with zero~$0$), and both structures are compatible in the sense that the map~$M\times M\to M$ that sends~$(x,y)$ to~$x\rhd y$ is a group homomorphism~(with respect to the usual abelian group structure on the product). In equations, this means~$0\rhd 0=0$ and
\[
(m+n)\rhd(p+q)=(m\rhd p)+(n\rhd q)
\]
for all~$m, n, p, q$ in~$M$. In particular, we have an automorphism~$\alpha\colon M\to M$ of the abelian group~$M$ defined by
\[
\alpha(x)=0\rhd x,
\]
and an endomorphism~$\epsilon\colon M\to M$ defined by
\[
\epsilon(x)=x\rhd 0.
\] 
The equation~\hbox{$x\rhd y=x\rhd0+0\rhd y$} can then be rewritten
\begin{equation}\label{eq:sum}
x\rhd y=\epsilon(x)+\alpha(y).
\end{equation}
We see that these two morphisms determine the rack structure and conversely. The calculation
\begin{align*}
\epsilon(\alpha(y))
&=\epsilon(0\rhd y)\\
&=(0\rhd y)\rhd 0\\
&=(0\rhd y)\rhd (0\rhd 0)\\
&=0\rhd(y\rhd 0)\\
&=\alpha(\epsilon(y))
\end{align*}
shows that~$\alpha$ and~$\epsilon$ commute. 

\begin{proposition}
The category of abelian racks is equivalent to the category of modules over the ring
\[
\bbZ\bfR=\bbZ[\rmA^{\pm},\rmE]/(\rmE^2-\rmE(1-\rmA)).
\]
\end{proposition}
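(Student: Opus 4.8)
The plan is to exhibit a functor from the category of abelian racks to the category of $\bbZ\bfR$--modules and to verify that it is an equivalence, the point being that the defining relation of $\bbZ\bfR$ is precisely the content of the self-distributivity axiom. Most of the structural work is already done above: an abelian rack is an abelian group $M$ on which $\rhd$ is bilinear, and by~\eqref{eq:sum} the operation is determined by the two commuting operators $\alpha, \epsilon \colon M \to M$ through $x \rhd y = \epsilon(x) + \alpha(y)$. I would send such an abelian rack to the group $M$ equipped with the action of $\bbZ[\rmA^{\pm}, \rmE]$ in which $\rmA$ acts as $\alpha$ and $\rmE$ as $\epsilon$; the relation defining $\bbZ\bfR$ will be verified in the next step, so that this action factors through $\bbZ\bfR$, and conversely a bilinear $\rhd$ is recovered from any such module by the same formula.

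The key step is to translate the self-distributivity axiom $x \rhd (y \rhd z) = (x \rhd y) \rhd (x \rhd z)$ into a relation between $\alpha$ and $\epsilon$. Expanding both sides with~\eqref{eq:sum} and using bilinearity, the left-hand side becomes $\epsilon(x) + \alpha\epsilon(y) + \alpha^2(z)$ and the right-hand side becomes $\epsilon^2(x) + \epsilon\alpha(y) + \alpha\epsilon(x) + \alpha^2(z)$. Each term involves exactly one of $x, y, z$, so the identity holds for all arguments if and only if it holds coefficient by coefficient; setting two variables to zero at a time isolates these. The $z$--coefficients agree trivially, the $y$--coefficients reproduce the commutativity $\alpha\epsilon = \epsilon\alpha$ noted earlier, and the $x$--coefficients yield $\epsilon = \epsilon^2 + \alpha\epsilon$, that is $\epsilon^2 = \epsilon(1 - \alpha)$. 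This is exactly the relation $\rmE^2 - \rmE(1 - \rmA)$ cutting out $\bbZ\bfR$. I would additionally note that the requirement that every left multiplication $y \mapsto \epsilon(x) + \alpha(y)$ be bijective reduces, since translations are bijections, to $\alpha$ being invertible, matching the unit $\rmA^{\pm}$; and that $\alpha = 0 \rhd (-)$ is automatically a group endomorphism. Hence an abelian rack structure on $M$ is the same datum as a $\bbZ\bfR$--module structure on $M$.

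It then remains to check that this correspondence is fully faithful. A group homomorphism $f \colon M \to N$ respects $\rhd$ if and only if, via~\eqref{eq:sum}, it intertwines the operators $\alpha, \epsilon$ on the two sides, which is exactly $\bbZ\bfR$--linearity; so the two categories are identified, in fact by an isomorphism that is the identity on underlying abelian groups.

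The step requiring the most care is the middle one: confirming that the single self-distributivity equation in three variables decouples into nothing more than the already-known commutativity and the one ring relation $\rmE^2 = \rmE(1 - \rmA)$, with no additional hidden constraint, and that the bijectivity of left multiplications corresponds cleanly to the invertibility of $\rmA$. Once the expansion is performed and the coefficients separated by the vanishing substitutions, the rest is routine verification.
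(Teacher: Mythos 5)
Your proposal is correct and follows essentially the same route as the paper: both encode the abelian rack structure by the commuting operators $\alpha$ and $\epsilon$ via~\eqref{eq:sum} and show that self-distributivity amounts exactly to the relation $\epsilon^2=\epsilon(1-\alpha)$ (the paper extracts this by the substitution $y=z=0$, i.e.\ from $x\rhd 0=(x\rhd 0)\rhd(x\rhd 0)$, which is your $x$--coefficient). Your version is somewhat more self-contained, since the paper delegates the converse direction to Fenn--Rourke and leaves the decoupling of the three-variable identity, the bijectivity/invertibility point, and full faithfulness implicit.
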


\begin{proof}
Fenn and Rourke~\cite[Sec.~1, Ex.~6]{Fenn+Rourke} have remarked that~$\bbZ\bfR$--modules define racks, using~\eqref{eq:sum}. Conversely, the calculation
\begin{align*}
\epsilon(x)
&=x\rhd0\\
&=x\rhd(0\rhd 0)\\
&=(x\rhd0)\rhd(x\rhd 0)\\
&=\epsilon(x)\rhd \epsilon(x)\\
&=(\epsilon(x)\rhd0)+(0\rhd \epsilon(x))\\
&=\epsilon^2(x)+\alpha\epsilon(x)
\end{align*}
shows that any abelian rack admits a module structure over that ring.
\end{proof}

\begin{remark}
If~$X$ is a rack, its (absolute) abelianization~$\Omega(X)$ corresponds to the quotient of the free~$\bbZ\bfR$--module with basis~$X$ by the relations
\[
x\rhd y=\rmE x+\rmA y
\]
for~$x$ and~$y$ in~$X$. For instance, if~$\star$ is the terminal rack, then it has precisely one element, and we get that~$\Omega(\star)$ is the quotient of the ring~$\bbZ\bfR$ by the ideal generated by the element~\hbox{$1=\rmE+\rmA$}. This is the ring~$\bbZ[\rmA^{\pm}]$, with~$\rmE=1-\rmA$.
\end{remark}

Every rack~$R$ comes with a canonical automorphism~$\rmF_R$ that is defined by the simple equation~\hbox{$\rmF_R(x)=x\rhd x$}, see~\cite{Szymik} for an exhaustive study. 

\begin{definition}\label{def:quandle}
A {\em quandle} is a rack such that its canonical automorphism is the identity. 
\end{definition}

The theory of quandles was born in the papers of Joyce~\cite{Joyce} and Matveev \cite{Matveev}, after a considerable embryonal phase. For details, we refer to the original papers and~\cite{Fenn+Rourke} again.

\begin{proposition}\label{prop:ab_quan}
The category of abelian quandles is equivalent to the category of modules over the ring
\[
\bbZ\bfQ=\bbZ[\rmA^\pm].
\]
\end{proposition}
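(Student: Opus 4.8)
The plan is to realize abelian quandles as exactly those abelian racks that satisfy the defining quandle condition, and then to translate that condition into a single algebraic relation on the ring $\bbZ\bfR$ already computed in the previous proposition.

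First I would recall that a quandle is a rack whose canonical automorphism $\rmF_R(x)=x\rhd x$ is the identity (Definition~\ref{def:quandle}), so that an abelian quandle is precisely an abelian rack~$M$ additionally satisfying $\rmF_M=\id$. Using the description of the rack operation on an abelian rack from~\eqref{eq:sum}, I would compute $\rmF_M(x)=x\rhd x=\epsilon(x)+\alpha(x)$. Under the equivalence identifying abelian racks with $\bbZ\bfR$--modules, where the generator~$\rmA$ acts as~$\alpha$ and~$\rmE$ acts as~$\epsilon$, this says that $\rmF_M$ acts as the element $\rmE+\rmA$ of $\bbZ\bfR$.

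The next step is to observe that the quandle condition $\rmF_M=\id$ is equivalent to requiring the element $\rmE+\rmA-1$ to act as zero on~$M$, that is, to imposing $\rmE=1-\rmA$ on~$M$. Hence abelian quandles are exactly those $\bbZ\bfR$--modules on which $\rmE-(1-\rmA)$ acts trivially, and these form the category of modules over the quotient ring $\bbZ\bfR/(\rmE-(1-\rmA))$.

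Finally, I would identify this quotient with $\bbZ[\rmA^\pm]$. In the quotient the relation $\rmE=1-\rmA$ eliminates the generator~$\rmE$, and one checks that the defining relation $\rmE^2-\rmE(1-\rmA)$ of $\bbZ\bfR$ then becomes $(1-\rmA)^2-(1-\rmA)(1-\rmA)=0$, so it is automatically satisfied and imposes no further constraint. This leaves precisely $\bbZ[\rmA^\pm]$, as claimed. I do not anticipate a serious obstacle; the one point deserving care is verifying that substituting $\rmE=1-\rmA$ collapses the quadratic relation of $\bbZ\bfR$ to a triviality, so that no residual relation on~$\rmA$ survives.
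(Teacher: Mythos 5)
Your proposal is correct and follows essentially the same route as the paper: the key step in both is that the quandle condition $x\rhd x=x$ forces $\epsilon=\id-\alpha$, after which the quadratic relation $\rmE^2=\rmE(1-\rmA)$ of $\bbZ\bfR$ becomes automatic, leaving $\bbZ[\rmA^{\pm}]$. Your packaging via the quotient ring $\bbZ\bfR/(\rmE-(1-\rmA))$ is just a slightly more systematic way of stating the same computation the paper performs directly.
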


\begin{proof}
Again, it is well known that~$\bbZ\bfQ$--modules define quandles. See~\cite[Sec.~1, p.~38]{Joyce}, \cite[\S~2, Ex.~1]{Matveev}, or~\cite[Sec.~1, Ex.~5]{Fenn+Rourke}, for instance. Conversely, the quandle condition~\hbox{$x\rhd x=x$} implies
\[
x=x\rhd x= x\rhd0+0\rhd x=\epsilon(x)+\alpha(x),
\]
or
\[
\epsilon=\id-\alpha.
\]
This leads to the relation~$\epsilon^2=\epsilon(1-\alpha)$ for abelian quandles.
\end{proof}

\begin{definition}
A rack is {\em involutary} if the axiom~$x\rhd(x\rhd y)$ is satisfied. A {\em kei} is an involutary quandle.
\end{definition}

If~$M$ is an abelian involutary rack, then we have~$\alpha^2=\id$. This implies the following two results.

\begin{proposition}
The category of abelian involutary racks is equivalent to the category of modules over the ring
\[
\bbZ\bfI=\bbZ[\rmA^{\pm},\rmE]/(\rmE^2-\rmE(1-\rmA),\rmA^2-1).
\] 
\end{proposition}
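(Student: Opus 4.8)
The plan is to bootstrap off the equivalence for abelian racks: an abelian involutary rack is simply an abelian rack whose underlying rack satisfies the extra axiom, so by that proposition it is a $\bbZ\bfR$--module $M$ on which the operation is $x\rhd y=\epsilon(x)+\alpha(y)$, with $\epsilon$ and $\alpha$ the actions of $\rmE$ and $\rmA$. Everything therefore reduces to translating the single involutary axiom $x\rhd(x\rhd y)=y$ into relations on the operators $\alpha$ and $\epsilon$ and identifying the quotient of $\bbZ\bfR$ that these cut out.

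I would carry out the translation by substituting $x\rhd y=\epsilon(x)+\alpha(y)$ into itself. Since $\alpha$ and $\epsilon$ are group endomorphisms,
\[
x\rhd(x\rhd y)=\epsilon(x)+\alpha\bigl(\epsilon(x)+\alpha(y)\bigr)=(\id+\alpha)\epsilon(x)+\alpha^2(y).
\]
Demanding that this equal $y$ for all $x$ and $y$, and specialising first $x=0$ and then $y=0$, splits the axiom into the two operator identities $\alpha^2=\id$ and $(\id+\alpha)\epsilon=0$. The first is exactly the relation $\rmA^2-1$ recorded in the statement, so this already shows that every abelian involutary rack is a module over $\bbZ\bfR/(\rmA^2-1)=\bbZ\bfI$. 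For the converse one equips a $\bbZ\bfI$--module with the operation $x\rhd y=\epsilon(x)+\alpha(y)$ and reads the computation backwards, the rack axioms coming for free from the surviving relation $\epsilon^2=\epsilon(1-\alpha)$ and the invertibility of $\rmA$.

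The main obstacle is the reappearance of the second identity $(\id+\alpha)\epsilon=0$ in the converse: once $\alpha^2=\id$ is imposed, the displayed computation reduces the involutary axiom to the vanishing of the translation term $(\id+\alpha)\epsilon(x)$, so one must verify that $(1+\rmA)\rmE$ already acts as zero on every $\bbZ\bfI$--module, that is, that it lies in the ideal generated by $\rmA^2-1$ and $\rmE^2-\rmE(1-\rmA)$. The tempting input is the identity $\rmE^2(1+\rmA)=\rmE(1-\rmA)(1+\rmA)=\rmE(1-\rmA^2)=0$, which follows at once from the two defining relations; the crux is whether one can descend from $\rmE^2(1+\rmA)=0$ to $(1+\rmA)\rmE=0$ itself. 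I would scrutinise this step most carefully, since a quick inspection of the four standard monomials $1,\rmA,\rmE,\rmE\rmA$ of $\bbZ\bfI$ suggests that $(1+\rmA)\rmE$ is \emph{not} in the ideal; were that the case, the presentation would have to be supplemented by the relation $(1+\rmA)\rmE$ for the stated equivalence to hold, with the quandle case unaffected because there $\epsilon=\id-\alpha$ makes $(\id+\alpha)\epsilon$ vanish automatically.
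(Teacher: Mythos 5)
Your reduction of the involutary axiom is exactly the computation the paper's one--sentence justification is based on (``If $M$ is an abelian involutary rack, then we have $\alpha^2=\id$''), but you carry it one step further than the paper does, and your suspicion at the end is justified: the statement as printed is not correct. Substituting $x\rhd y=\epsilon(x)+\alpha(y)$ into $x\rhd(x\rhd y)=y$ and specialising $x=0$ and $y=0$ shows, as you say, that the axiom is equivalent to the \emph{pair} of identities $\alpha^2=\id$ and $(\id+\alpha)\epsilon=0$; the paper records only the first. The second does not follow from the presentation $\bbZ[\rmA^{\pm},\rmE]/(\rmE^2-\rmE(1-\rmA),\rmA^2-1)$: the rewriting rules $\rmA^2\mapsto 1$ and $\rmE^2\mapsto\rmE-\rmA\rmE$ are confluent, so this ring is free over $\bbZ$ on the monomials $1,\rmA,\rmE,\rmA\rmE$, and $(1+\rmA)\rmE=\rmE+\rmA\rmE$ is a nonzero basis combination. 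The identity $\rmE^2(1+\rmA)=\rmE(1-\rmA^2)=0$ that you flag cannot be divided by $\rmE$. A concrete witness: $M=\bbZ^2$ with $\alpha=\id$ and $\epsilon=\left(\begin{smallmatrix}0&1\\0&0\end{smallmatrix}\right)$ is a module over the displayed ring and hence an abelian rack via $x\rhd y=\epsilon(x)+y$, but $x\rhd(x\rhd y)=2\epsilon(x)+y\neq y$, so it is not involutary.

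The correct statement takes $\bbZ\bfI=\bbZ[\rmA^{\pm},\rmE]/(\rmE^2-\rmE(1-\rmA),\rmA^2-1,\rmE(1+\rmA))$; modulo $\rmA\rmE=-\rmE$ the first relation becomes $\rmE^2=2\rmE$, and the ring is free of rank $3$ over $\bbZ$ with basis $1,\rmA,\rmE$ (in particular it is not isomorphic to the rank--$4$ ring in the statement, and since both are commutative their module categories cannot be equivalent either). As you note, the kei case is untouched because $\epsilon=\id-\alpha$ forces $(\id+\alpha)\epsilon=\id-\alpha^2=0$ once $\alpha^2=\id$, and nothing else in the paper depends on $\bbZ\bfI$, so the main theorem is unaffected. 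With the extra relation added to the statement, your argument is complete and is a fuller version of the proof the paper only sketches.
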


\begin{proposition}
The category of abelian kei is equivalent to the category of modules over the ring
\[
\bbZ\bfK=\bbZ[\rmA]/(\rmA^2-1).
\]
\end{proposition}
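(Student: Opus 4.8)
The plan is to follow the template of Proposition~\ref{prop:ab_quan} for abelian quandles, now imposing the involutary axiom on top of the quandle axiom. First I would recall, from the discussion preceding the abelian rack proposition, that an abelian rack structure on an abelian group~$M$ is the same datum as a commuting pair~$(\alpha,\epsilon)$ of an automorphism~$\alpha$ and an endomorphism~$\epsilon$, related to the operation by the formula~\eqref{eq:sum} and subject to~$\epsilon^2=\epsilon(1-\alpha)$. Since a kei is an involutary quandle, the task is simply to translate both extra axioms into relations on this pair and then to read off the resulting ring.

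For the quandle axiom, the computation in the proof of Proposition~\ref{prop:ab_quan} already gives~$\epsilon=\id-\alpha$, so that~$\rmE$ is eliminated and the rack relation~$\epsilon^2-\epsilon(1-\alpha)$ holds automatically. It then remains to feed in the involutary axiom. Expanding~$x\rhd(x\rhd y)=y$ by means of~\eqref{eq:sum} yields~$\epsilon(x)+\alpha\epsilon(x)+\alpha^2(y)=y$ for all~$x$ and~$y$; evaluating at~$x=0$ gives~$\alpha^2=\id$, which is exactly the relation recorded above for abelian involutary racks, and the remaining identity~$(\id+\alpha)\epsilon=0$ is then automatic from~$\epsilon=\id-\alpha$ because~$(1+\alpha)(1-\alpha)=1-\alpha^2$. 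Thus an abelian kei is precisely a module over~$\bbZ[\rmA^\pm]/(\rmA^2-1)$, with~$\rmA$ acting as~$\alpha$.

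What remains is the identification of this ring with the stated~$\bbZ\bfK=\bbZ[\rmA]/(\rmA^2-1)$, together with the converse direction. For the ring, I would observe that modulo~$\rmA^2-1$ the element~$\rmA$ is its own inverse, so adjoining a formal inverse is redundant and the natural map~$\bbZ[\rmA]/(\rmA^2-1)\to\bbZ[\rmA^\pm]/(\rmA^2-1)$ is an isomorphism. For the converse, that every such module does define a kei, I would set~$x\rhd y=(1-\rmA)x+\rmA y$ and verify the rack, quandle, and involutary axioms in turn; each reduces to a short polynomial identity in~$\bbZ[\rmA]/(\rmA^2-1)$, so this direction is routine. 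I do not expect a genuine obstacle: the entire content is the combination of the two relations already derived for quandles and for involutary racks, and the only mildly subtle point is the redundancy of the formal inverse once~$\rmA^2=1$, which is elementary.
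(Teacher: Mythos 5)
Your proposal is correct and follows essentially the same route as the paper, which simply combines the quandle relation~$\epsilon=\id-\alpha$ from Proposition~\ref{prop:ab_quan} with the involutary relation~$\alpha^2=\id$ and leaves the rest to the reader. You supply the details the paper omits (deriving~$\alpha^2=\id$ by evaluating at~$0$, checking that~$(\id+\alpha)\epsilon=0$ is automatic, and noting the redundancy of the formal inverse of~$\rmA$), all of which are accurate.
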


\begin{remark}
Note that in the two idempotent cases (when the canonical automorphism is trivial) we get group rings~\hbox{$\bbZ\bfQ\cong\bbZ[\,\rmC_\infty\,]$} and~\hbox{$\bbZ\bfK\cong\bbZ[\,\rmC_2\,]$}, where~$\rmC_n$ denotes a cyclic group of order~$n$.
\end{remark}

\begin{remark}\label{rem:Burau}
The Burau representations~\eqref{eq:Burau} for the theories~$\bfR$,~$\bfQ$,~$\bfI$, and~$\bfK$ take the following form.
\begin{align*}
\Aut_\bfR(\rmF\bfR_n)&\longrightarrow\GL_n(\bbZ[\rmA^{\pm},\rmE]/(\rmE^2-\rmE(1-\rmA)))\\
\Aut_\bfQ(\rmF\bfQ_n)&\longrightarrow\GL_n(\bbZ[\rmA^{\pm1}])\\
\Aut_\bfI(\rmF\bfI_n)&\longrightarrow\GL_n(\bbZ[\rmA^{\pm},\rmE]/(\rmE^2-\rmE(1-\rmA),\rmA^2-1))\\
\Aut_\bfK(\rmF\bfK_n)&\longrightarrow\GL_n(\bbZ[\rmA^{\pm}]/(\rmA^2-1))
\end{align*}
The name comes from the fact that the braid group on~$n$ strands embeds into the group~$\Aut_\bfQ(\rmF\bfQ_n)$ in such a way that the restriction of the representation above is the classical Burau representation~\cite{Burau}.
\end{remark}


\section{The classical Alexander modules of knots}\label{sec:abel_knot}

In this section we apply the theory of abelian quandles to the quandles arising in knot theory. We will also see how the classical Alexander invariants can be interpreted in these terms.

Let~$K$ be a knot in the~$3$--sphere~$\rmS^3$ with knot quandle~$\rmQ K$. The knot quandle has been introduced by Joyce~\cite{Joyce} and Matveev~\cite{Matveev}, who have also shown, building on deep results of Waldhausen's~\cite{Waldhausen}, that the knot quandle is a complete invariant of the knot.

We can find a presentation of the knot quandle~$\rmQ K$ from any diagram of the knot: the generators are the arcs, and there is a relation of the form~\hbox{$x\rhd y = z$} whenever~$x$,~$y$, and~$z$ meet in a crossing, with~$x$ as the overpass, and~$y$ turns into~$z$ under it, as in Figure~\ref{fig:crossing}. Note that the orientations of the arcs~$y$ and~$z$ are not relevant and they have, for that reason, not been displayed.

\begin{figure}
\caption{A crossing in a knot diagram}
\label{fig:crossing}
\[
\xymatrix{
x && y\\
&{\phantom{x}}\ar@{-}[ur]&\\
z\ar@{-}[ur] && x\ar[lluu]
}
\]
\end{figure}
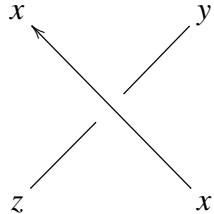

In other words, we can present the quandle~$\rmQ K$ as a coequalizer
\begin{equation}\label{eq:presQ}
\xymatrix@1{
\rmQ K&\rmF\bfQ\{\text{arcs}\}\ar[l]&\rmF\bfQ\{\text{crossings}\},\ar@<+.4ex>[l]\ar@<-.4ex>[l]
}
\end{equation}
where one of the two parallel arrows sends a crossing involving the arcs~$x$,~$y$, and~$z$ as in Figure~\ref{fig:crossing} to the element~$x\rhd y$ in the free quandle on the set of arcs, and the other one sends it to the arc~$z$.

There is a forgetful functor from the category of groups to the category of quandles: given a group~$G$, the underlying set comes with the quandle structure given by~\hbox{$x\rhd y=xyx^{-1}$}. This functor admits a left adjoint. The left adjoint sends a quandle~$Q$ to the quotient of the free group on~$Q$ with the relations~$x\rhd y=xyx^{-1}$ for all~$x$ and~$y$ in~$Q$, see~\cite[Sec.~6]{Joyce} or~\cite[\S~5]{Matveev}. The left adjoint sends a knot quandle~$\rmQ K$ to the knot group~$\pi K$, because the presentation~\eqref{eq:presQ} above is mapped to Wirtinger's~(unpublished) presentation of the knot group, where the relation~$x\rhd y=z$ now reads~$xyx^{-1}=z$, see~\cite[Sec.~6]{Joyce} or~\cite[Prop.~3]{Matveev}.

The abelianization~$\pi^\ab K$ of the knot group is always infinite cyclic, independent of the knot~$K$. There are different knots with isomorphic knot groups, but at least there is no non-trivial knot with a knot group isomorphic to that of the unknot: by~\cite{Papakyriakopoulos}, the knot group is abelian~(hence infinite cyclic) if and only if the knot is trivial. 

The weakness of the knot group as an invariant has its virtues: the kernel of the abelianization~$\pi K\to\pi^\ab K$ defines an infinite cyclic covering of the knot complement, and the first homology of the covering space, as a module over the group ring~$\bbZ[\rmA^{\pm1}]$ of the deck transformation group, is the classical Alexander module of the knot. The Alexander polynomial is the characteristic polynomial of the action of the generator~$\rmA$ on the torsion part. See Milnor's concise summary~\cite{Milnor} or Lickorish's~\cite[Chp.~6]{Lickorish} for a textbook treatment.

The abelianization~$\Omega(\rmQ K)$ of the knot quandle~$\rmQ K$ is an abelian quandle, or equivalently, according to Proposition~\ref{prop:ab_quan}, it can also be thought of as a module over the Laurent polynomial ring~$\bbZ\bfQ=\bbZ[\rmA^{\pm1}]$. The general formalism of Section~\ref{sec:abel} allows us to compute the module~$\Omega(\rmQ K)$ from a presentation given by a diagram of the knot as follows.

\vbox{\begin{proposition}
Given a diagram of a knot~$K$, the~$\bbZ[\rmA^{\pm1}]$--module~$\Omega(\rmQ K)$ is a cokernel of the homomorphism
\[
\bbZ[\rmA^{\pm1}]\{\text{\upshape arcs}\}
\longleftarrow
\bbZ[\rmA^{\pm1}]\{\text{\upshape crossings}\}
\]
between free~$\bbZ[\rmA^{\pm1}]$--modules that sends a crossing to the~$\bbZ[\rmA^{\pm1}]$--linear combination
\[
(1-\rmA)x+\rmA y-z
\]
of the arcs involved in that crossing as in Figure~\ref{fig:crossing}.
\end{proposition}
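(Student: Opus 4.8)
The plan is to feed the coequalizer presentation~\eqref{eq:presQ} of the knot quandle into the general abelianization recipe of Section~\ref{sec:abel}. Taking~$\bfT=\bfQ$ in that Proposition and using Proposition~\ref{prop:ab_quan} to identify~$\bbZ\bfQ$ with~$\bbZ[\rmA^{\pm1}]$, one obtains~$\Omega(\rmQ K)$ at once as the cokernel, inside the free module~$\bbZ[\rmA^{\pm1}]\{\text{arcs}\}=\Omega(\rmF\bfQ\{\text{arcs}\})$, of the difference of the two~$\bbZ[\rmA^{\pm1}]$--linear maps obtained by applying~$\Omega$ to the two parallel arrows in~\eqref{eq:presQ}. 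Everything then reduces to evaluating these two induced maps on the generator attached to a single crossing.

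First I would record the quandle operation carried by the underlying set of a~$\bbZ[\rmA^{\pm1}]$--module: combining~\eqref{eq:sum} with the relation~$\epsilon=\id-\alpha$ established in the proof of Proposition~\ref{prop:ab_quan}, it is
\[
x\rhd y=\epsilon(x)+\alpha(y)=(1-\rmA)x+\rmA y.
\]
This is the only structural input the computation requires. Next, since a homomorphism out of a free quandle is determined by the images of its generators, the two parallel arrows are pinned down by sending the crossing generator of Figure~\ref{fig:crossing} to~$x\rhd y$ and to~$z$, respectively. Applying~$\Omega$ and using naturality of the universal derivation~$\eta\colon\rmF\bfQ\{\text{arcs}\}\to\Omega(\rmF\bfQ\{\text{arcs}\})$, the image of the crossing generator under~$\Omega$ of the first arrow is~$\eta(x\rhd y)$. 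Because~$\eta$ sends arc generators to the corresponding module generators and is a quandle map into the underlying quandle of the free module, it intertwines~$\rhd$ with the operation displayed above, so that~$\eta(x\rhd y)=(1-\rmA)x+\rmA y$; the second arrow likewise sends the crossing to~$z$. Subtracting gives the asserted element~$(1-\rmA)x+\rmA y-z$, and taking the cokernel over all crossings yields~$\Omega(\rmQ K)$.

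The step I would treat with the most care is the passage through~$\eta$: one must check that~$\Omega$ of a morphism between free quandles is the module map sending each generator to the derivation-image of its target, so that the nonlinear operation~$\rhd$ in the source is faithfully replaced by the affine expression~$(1-\rmA)x+\rmA y$. This is precisely the quandle analogue of the Remark computing~$\Omega(X)$ for a rack~$X$ as the free~$\bbZ\bfR$--module modulo the relations~$x\rhd y=\rmE x+\rmA y$, specialized via~$\rmE=1-\rmA$, and it is what converts the coequalizer of quandles into the claimed presentation matrix.
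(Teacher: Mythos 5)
Your proposal is correct and follows the same route as the paper: apply the general coequalizer recipe for $\Omega$ from Section~\ref{sec:abel} to the presentation~\eqref{eq:presQ}, identify $\bbZ\bfQ$ with $\bbZ[\rmA^{\pm1}]$ via Proposition~\ref{prop:ab_quan}, and evaluate the two induced maps on a crossing using $x\rhd y=(1-\rmA)x+\rmA y$. The paper's proof leaves the final evaluation implicit ("this immediately leads to the stated result"), whereas you spell out the passage through the universal derivation; that is a welcome elaboration, not a different argument.
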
}

\begin{proof}
Recall that the knot quandle~$\rmQ K$ has a presentation as a coequalizer~\eqref{eq:presQ}. Abelianization~$\Omega$ is a left adjoint, and it therefore preserves colimits such as coequalizers. This immediately leads to the stated result.
\end{proof}

Recall that the Alexander module of a knot is the~$\bbZ[\rmA^{\pm1}]$--module given by the first homology of the infinite cyclic cover of the knot. The {\em extended Alexander module} will have an additional free~$\bbZ[\rmA^{\pm1}]$--summand of rank~$1$. In other words, if~$M$ is the Alexander module, then~$\bbZ[\rmA^{\pm1}]\oplus M$ is the extended Alexander module. The following result has been established, in a different mathematical language, by Joyce~\cite[Sec.~17]{Joyce} and Matveev~\cite[\S11]{Matveev}, compare Remark~\ref{rem:abelian_in_Joyce}.

\begin{proposition}\label{prop:classical} 
The abelianization of~$\rmQ K$, the~$\bbZ[\rmA^{\pm1}]$--module~$\Omega(\rmQ K)$, is isomorphic to the extended Alexander module of the knot. 
\end{proposition}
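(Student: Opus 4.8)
The plan is to show that the presentation of~$\Omega(\rmQ K)$ coming from the previous proposition is literally the classical Alexander matrix attached to a diagram by the Fox calculus, and then to read off the extended Alexander module from that matrix by the standard covering-space argument.

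First I would take the matrix description already available: by the preceding proposition, $\Omega(\rmQ K)$ is the cokernel of the map $\bbZ[\rmA^{\pm1}]\{\text{arcs}\}\leftarrow\bbZ[\rmA^{\pm1}]\{\text{crossings}\}$ sending a crossing to $(1-\rmA)x+\rmA y-z$. The content of the proof is to recognize this map. Under the left adjoint from quandles to groups, the presentation~\eqref{eq:presQ} of~$\rmQ K$ becomes the Wirtinger presentation of~$\pi K$, and the crossing of Figure~\ref{fig:crossing} contributes the relator $r=xyx^{-1}z^{-1}$. Its Fox derivatives are $\partial r/\partial x=1-xyx^{-1}$, $\partial r/\partial y=x$, and $\partial r/\partial z=-1$; applying the abelianization $\phi\colon\pi K\to\pi^\ab K=\bbZ[\rmA^{\pm1}]$, under which every meridian goes to~$\rmA$, produces exactly the row $(1-\rmA,\rmA,-1)$. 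Hence the presenting matrix of~$\Omega(\rmQ K)$ agrees entry for entry with the abelianized Fox Jacobian of the Wirtinger presentation. This identification is the only place where something genuinely has to be checked; everything else is classical bookkeeping.

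Next I would interpret this Jacobian topologically. Choosing a two-dimensional deformation retract of the knot complement~$X$ with one $0$--cell, one $1$--cell per arc, and one $2$--cell per crossing (dropping one cell to account for the single redundant Wirtinger relation), the abelianized Jacobian is precisely the second cellular boundary~$\partial_2$ in the chain complex of the infinite cyclic cover~$\widetilde X$, by the Fox--Reidemeister formula for boundaries of covers, while $\partial_1$ sends each arc to $(\rmA-1)\tilde p$. In the complex relative to the lifted basepoint~$\tilde p$ the bottom group is killed, so $\Omega(\rmQ K)\cong\operatorname{coker}(\partial_2)\cong H_1(\widetilde X,\tilde p)$; including the one redundant relation would only enlarge $\ker\partial_2$ and hence leaves this cokernel untouched. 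Finally I would feed this into the long exact sequence of the pair, $0\to H_1(\widetilde X)\to H_1(\widetilde X,\tilde p)\to H_0(\tilde p)\to H_0(\widetilde X)\to 0$, in which $H_0(\tilde p)=\bbZ[\rmA^{\pm1}]$ surjects by augmentation onto $H_0(\widetilde X)=\bbZ$ with kernel the free rank-one module $(\rmA-1)\bbZ[\rmA^{\pm1}]$. This presents $H_1(\widetilde X,\tilde p)$ as an extension of a free module by the Alexander module $M=H_1(\widetilde X)$, which splits because free modules are projective, giving $H_1(\widetilde X,\tilde p)\cong\bbZ[\rmA^{\pm1}]\oplus M$, the extended Alexander module.

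I expect the main obstacle to be the basepoint bookkeeping in the last step: ensuring that the single extra free summand in the definition of the extended module matches the relative-homology normalization implicit in the quandle presentation, rather than being lost or counted twice, and likewise confirming that the redundant relation affects only the kernel and not the image of~$\partial_2$. If one prefers to avoid reproving these classical facts, the whole of the second and third steps can be replaced by a citation of the Fox--Crowell identification of the cokernel of the abelianized Jacobian with the extended Alexander module; see~\cite{Milnor} and~\cite{Lickorish}.
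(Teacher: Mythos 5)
Your argument is correct, but note that the paper offers no proof of this proposition at all: it is stated with an attribution to Joyce~[Sec.~17] and Matveev~[\S11], who established the identification in a different language (see Remark~\ref{rem:abelian_in_Joyce} for the translation issue with Joyce's notion of ``abelian''). Your route through the Fox calculus is the standard classical alternative, and it goes through: the row $(1-\rmA,\rmA,-1)$ is indeed the abelianized Fox Jacobian of the Wirtinger relator $xyx^{-1}z^{-1}$ (the only genuinely new observation, since the quandle-to-group functor carries the presentation~\eqref{eq:presQ} to the Wirtinger presentation); the abelianized Jacobian presents $H_1(\widetilde X,\tilde p)$ over $\bbZ[\rmA^{\pm1}]$; and the long exact sequence of the pair, together with the freeness of the augmentation ideal $(\rmA-1)\bbZ[\rmA^{\pm1}]$, splits off the extra rank-one summand. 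The two points you flag as delicate are in fact fine: the Fox-derivative row of a relator that is a consequence of the others lies in the $\bbZ[\rmA^{\pm1}]$-span of their rows (chain rule for Fox derivatives), so keeping all $n$ crossings changes only the kernel of $\partial_2$ and not its image; and $H_1(\widetilde X,\tilde p)=\operatorname{coker}\partial_2$ because the relative cellular complex has no $0$-chains. What the paper's citation buys is brevity and a direct link to the quandle literature; what your computation buys is an explicit dictionary between the coequalizer presentation of $\Omega(\rmQ K)$ and the classical Alexander matrix, which makes the ``extended'' free summand visible as the augmentation ideal of the deck-transformation ring rather than as an unexplained normalization. Either way, citing the Crowell--Fox identification for the covering-space steps, as you suggest, is legitimate and would shorten the write-up considerably.
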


\begin{example}
The unknot~$U$ has a diagram with one arc and no crossing. Therefore, its quandle~$\rmQ U$ is the free quandle on one generator. The free quandle on one generator is the terminal quandle with one element. Its abelianization is a free~$\bbZ[\rmA^{\pm1}]$--module on one generator. It can also be described as the cokernel of the homomorphism~\hbox{$\bbZ[\rmA^{\pm1}]\longleftarrow0$}.
\end{example}

\begin{example}
For the trefoil knot~$T$, the usual diagram with three arcs and three crossings leads to the presentation matrix
\[
\begin{bmatrix}
1-\rmA & -1 & \rmA\\
\rmA & 1-\rmA & -1 \\
-1 & \rmA & 1-\rmA 
\end{bmatrix}
\]
for the abelianization~$\Omega(\rmQ T)$, the extended Alexander module of~$T$, as a cokernel of a~$\bbZ[\rmA^{\pm1}]$--linear endomorphism of~$\bbZ[\rmA^{\pm1}]^{\oplus 3}$. We recognize that the extended Alexander module is isomorphic to
\[
\bbZ[\rmA^{\pm1}]/(\rmA^2-\rmA+1)\oplus\bbZ[\rmA^{\pm1}].
\] 
\end{example}

\begin{remark}
When the Alexander module of a knot~$K$ is isomorphic to the cyclic module~$\bbZ[\rmA^{\pm1}]/(\Delta_K(\rmA))$, where~$\Delta_K$ is the Alexander polynomial of~$K$, we have
\[
\Omega(\rmQ K)\cong\bbZ[\rmA^{\pm1}]/(\Delta_K(\rmA))\oplus\bbZ[\rmA^{\pm1}].
\]
But not every Alexander module is cyclic. For instance, the one of the pretzel knot~$P(25,-3,13)$ is not, see~\cite{Fox+Smythe} and~\cite{Milnor}.
\end{remark}

\begin{remark}
The presence of the extraneous free~$\bbZ[\rmA^{\pm1}]$--summand can be explained as follows: if a quandle~$Q$ is non-empty, then it has the free quandle of rank~$1$ as a retract. (Since~$\rmF\bfQ_1$ is a singleton~$\star$, any element in~$Q$ gives a section of the projection~\hbox{$Q\to\star$}.) As a consequence of that, the abelianization~$\Omega(Q)$ will then have the free module~$\Omega(\rmF\bfQ_1)$ of rank~$1$ as a retract.
\end{remark}


\section{Beck modules}\label{sec:Beck}

Many categories~$\bfT$ of models for algebraic theories do not have interesting abelian group objects. For instance, this is the case for the theory of commutative rings. And even for those theories that have interesting abelian group objects, one can do better than just looking at the absolute abelianization functor: one can use a relative version of it. This leads to abelian invariants that are tailored to a given object~$X$ of~$\bfT$. For instance, given a group~$G$, this naturally leads us to consider~$G$--modules, that is, modules over the integral group ring~$\bbZ G$. In general, the situation is more complicated though.

Let us choose an object~$X$ in~$\bfT$, and let~$\bfT_X$ denote the slice category of objects of~$\bfT$ over~$X$. The product of $\mu\colon M\to X$ and $\nu\colon N\to X$ in~$\bfT_X$ is the pullback
\[
M\times_XN=\{\,(m,n)\in M\times N\,|\,\mu(m)=\nu(n)\,\}
\]
in~$\bfT$. Let again~$\Ab(\bfT_X)$ denote the category of abelian group objects in the category~$\bfT_X$. According to Quillen~\cite[p.~69]{Quillen:summary}, this category is abelian.

\begin{definition}
The objects in~$\Ab(\bfT_X)$ are the~{\em$X$--modules} in the sense of Beck~\cite[Def.~5]{Beck}. 
\end{definition}

In other words, an $X$--module consists of the following data: a $\bfT$--model $M$ together with a structure morphism~\hbox{$M\to X$} and operations~\hbox{$e\colon X\to M$} (the unit),~$i\colon M\to M$ (the inverse), and~\hbox{$a\colon M\times_X M\to M$} (the addition). These data need to satisfy the following axioms: writing~$e'$ for the composition of~$e$ with the structure morphism~$M\to X$, the four diagrams
\[
\xymatrix{
M\times_X M\times_X M\ar[r]^-{\id\times a}\ar[d]_{a\times\id}&M\times_X M\ar[d]^a\\
M\times_X M\ar[r]_a&M
}
\hspace{1em}
\xymatrix{
M\times_X M\ar[dr]_a\ar[rr]^{(\pr_2,\pr_1)}&&M\times_X M\ar[dl]^a\\
&M&
}
\]
\[
\xymatrix{
M\ar[r]^-{(\id,e')}\ar[dr]_\id&M\times_X M\ar[d]^a&M\ar[l]_-{(e',\id)}\ar[dl]^\id\\
&M&
}
\hspace{1em}
\xymatrix{
M\ar[r]^-{(\id,i)}\ar[dr]_{e'}&M\times_X M\ar[d]_a&M\ar[l]_-{(i,\id)}\ar[dl]^{e'}\\
&M&
}
\]
commute.

\begin{example}\label{ex:Gmod}
If~$\bfT=\bfG$ is the category of groups, and~$G$ is any group then the category~$\Ab(\bfG_G)$ is equivalent to the category of~$\bbZ G$--modules. An equivalence is given by associating with every abelian group over~$G$ the kernel of its structure homomorphism to~$G$. An inverse is constructed by sending a~$\bbZ G$--module~$M$ to the split extension~$M\rtimes G$ given by the semi-direct product.
\end{example}

\begin{remark}
Contrary to what one might wish for, the abelian category~$\Ab(\bfT_X)$ is not always equivalent to the category of modules over a ring. For instance, when~$\bfT=\bfS$ is the algebraic theory of sets, then~$\bfT_X$ is the category of sets over~$X$, or equivalently the category of~$X$--graded sets, by passage to fibers.
Then~$\Ab(\bfT_X)$ is the category of~$X$--graded abelian groups, and the author knows how to realize this~(up to equivalence) as a category of modules over a ring only if~$X$ is finite. This problem disappears if one is willing to work with `ringoids.'
\end{remark}

\begin{definition}
The left adjoint
\[
\Omega_X\colon\bfT_X\longrightarrow\Ab(\bfT_X).
\]
to the forgetful functor~$\Ab(\bfT_X)\to\bfT_X$ is the {\em (relative) abelianization} functor. 
\end{definition}

\begin{remark}
In the relative situation, when~$Y$ in~$\bfT_X$ is an object over~$X$, and~$N$ in~$\Ab(\bfT_X)$ is an~$X$--module in the sense of Beck, we will write $\Der_X(Y;N)$ for the module
\[
\bfT_X(Y,N)\cong\Hom_{\Ab(\bfT_X)}(\Omega_X(Y),N)
\]
of {\em$X$--derivations} from~$Y$ into~$N$. We will mostly be interested in the case when~\hbox{$Y=X$} is the terminal object over~$X$. The reasons for this will become apparent in Remark~\ref{rem:rel_vs_abs} below.
\end{remark}


Let us see how the~$X$--module~$\Omega_X(Y)$ might be computed, at least in principle. 

\begin{proposition}
If the diagram
\[
\xymatrix@1{
Y&\rmF\bfT(S)\ar[l]&\rmF\bfT(R)\ar@<+.4ex>[l]\ar@<-.4ex>[l]
}
\]
displays~$Y$ as a coequalizer of free objects~$\rmF\bfT(S)$ and~$\rmF\bfT(R)$, then there is a diagram
\[
\xymatrix@1{
\Omega_X(Y)&\Omega_X(\rmF\bfT(S))\ar[l]&\Omega_X(\rmF\bfT(R))\ar[l]
}
\]
that displays~$\Omega_X(Y)$ as the cokernel of the difference of the induced maps between free~$X$--modules.
\end{proposition}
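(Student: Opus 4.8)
The plan is to mirror the proof of the absolute statement given above, carrying out each step inside the slice category~$\bfT_X$ rather than in~$\bfT$ itself. The single structural input is that~$\Omega_X$, being a left adjoint by construction, preserves all colimits, coequalizers among them. So once I know that the displayed presentation of~$Y$ is a coequalizer \emph{in~$\bfT_X$}, applying~$\Omega_X$ will produce a coequalizer diagram in the category~$\Ab(\bfT_X)$ of~$X$--modules.

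First I would promote the given coequalizer from~$\bfT$ to the slice category. The object~$Y$ already carries a structure map~$Y\to X$, and composing the leg~$\rmF\bfT(S)\to Y$ with it equips~$\rmF\bfT(S)$ with a map to~$X$; composing once more along either of the two parallel arrows equips~$\rmF\bfT(R)$ with a map to~$X$. These two composites agree, precisely because the parallel arrows become equal after passing to the coequalizer~$Y$, so the entire diagram lives over~$X$ and is a diagram in~$\bfT_X$. Now I would invoke the standard fact that the forgetful functor~$\bfT_X\to\bfT$ creates connected colimits; since the indexing shape of a coequalizer is connected, the diagram that is a coequalizer in~$\bfT$ remains one in~$\bfT_X$. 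This identifies~$Y$ as the coequalizer in~$\bfT_X$ of the two free objects, now regarded as objects over~$X$.

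Next I would apply~$\Omega_X$ and read off the conclusion. Because~$\Omega_X$ preserves coequalizers, the images~$\Omega_X(\rmF\bfT(S))$ and~$\Omega_X(\rmF\bfT(R))$---the free~$X$--modules of the statement---sit in a coequalizer diagram with vertex~$\Omega_X(Y)$. Finally, the category~$\Ab(\bfT_X)$ is abelian, by Quillen's result cited above, and in any abelian category the coequalizer of a parallel pair~$f,g$ coincides with the cokernel of the single map~$f-g$. Applying this to the induced parallel pair yields the desired presentation of~$\Omega_X(Y)$ as the cokernel of the difference of the two maps between free~$X$--modules.

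The only step that requires genuine care, as opposed to formal manipulation, is the promotion of the coequalizer into~$\bfT_X$: one must check both that the free objects inherit compatible maps to~$X$---which hinges on the two parallel arrows agreeing over~$Y$---and that the colimit is undisturbed by passing to the slice, which is exactly the creation of connected colimits by the forgetful functor. Everything after that is the same bookkeeping as in the absolute case, with ``free~$\bbZ\bfT$--module'' replaced by ``free~$X$--module'' and ``cokernel of modules over~$\bbZ\bfT$'' replaced by ``cokernel in the abelian category~$\Ab(\bfT_X)$.''
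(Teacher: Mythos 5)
Your main line of argument---$\Omega_X$ is a left adjoint, hence preserves the coequalizer, and in the abelian category $\Ab(\bfT_X)$ a coequalizer of a parallel pair is the cokernel of the difference---is exactly the paper's second paragraph, and your extra care in first promoting the coequalizer to the slice category (equipping $\rmF\bfT(S)$ and $\rmF\bfT(R)$ with structure maps by composition with $Y\to X$, checking the two composites from $\rmF\bfT(R)$ agree, and invoking creation of colimits by $\bfT_X\to\bfT$) is a point the paper leaves implicit; that part of your write-up is correct and, if anything, more precise than the original.

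What you are missing is the other half of the proposition: the claim that the objects appearing in the resulting presentation are \emph{free} $X$--modules. You assert this parenthetically (``the free $X$--modules of the statement'') but never prove it, and it is not automatic---$\Omega_X$ applied to a free object of $\bfT$ sitting over $X$ has to be identified with the free object of $\Ab(\bfT_X)$ on the corresponding set over $X$. The paper devotes its first paragraph to exactly this, via the composite adjunction isomorphisms
\[
\Hom_{\Ab(\bfT_X)}(\Omega_X(\rmF\bfT(S)),N)\cong\bfT_X(\rmF\bfT(S),N)\cong\bfS_X(S,N),
\]
natural in the $X$--module $N$, which exhibit $\Omega_X(\rmF\bfT(S))$ as the free $X$--module on the set $S\to\rmF\bfT(S)\to Y\to X$. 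Without this identification you have only shown that $\Omega_X(Y)$ is a cokernel of a map between the modules $\Omega_X(\rmF\bfT(S))$ and $\Omega_X(\rmF\bfT(R))$, not that these are free; and the freeness is precisely what makes the proposition usable for computation (and is what gets exploited later, e.g.\ in the proof of Theorem~\ref{thm:main}, where the analogous fact that left adjoints carry free objects to free objects is the crux). Adding the displayed adjunction argument closes the gap.
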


\begin{proof}
If the object~$Y=\rmF\bfT(S)$ over~$X$ is free on a set~$S$, then we would like an~$X$--module~$\Omega_{X}(\rmF\bfT(S))$ together with natural isomorphisms
\[
\Hom_{\Ab(\bfT_X)}(\Omega_X(\rmF\bfT(S)),N)\cong\bfT_X(\rmF\bfT(S),N)\cong\bfS_X(S,N)
\]
for all~$X$--modules~$N$. For any given set~\hbox{$S\to X$} over the underlying set of the object~$X$, there exists a free~$X$--module over it. It is again clear that such a free~$X$--module over the composition~\hbox{$S\to\rmF\bfT(S)=Y\to X$} solves our problem. 

In general, we can write the object~$Y$ as a colimit of free objects, and use that the functor~$\Omega_X$, as a left adjoint, has to preserve these. Specifically, if
\[
\xymatrix@1{
Y&\rmF\bfT(S)\ar[l]&\rmF\bfT(R)\ar@<+.4ex>[l]\ar@<-.4ex>[l]
}
\]
displays~$Y$ as a coequalizer of free objects~$\rmF\bfT(S)$ and~$\rmF\bfT(R)$, then we have a coequalizer diagram
\[
\xymatrix@1{
\Omega_X(Y)&\Omega_X(\rmF\bfT(S))\ar[l]&\Omega_X(\rmF\bfT(R))\ar@<+.4ex>[l]\ar@<-.4ex>[l]
}
\]
of~$X$--modules, so that the module~$\Omega_X(Y)$ is the cokernel of the difference of the parallel induced maps. This presents~$\Omega_X(Y)$ in terms of free~$X$--modules, as desired.
\end{proof}


\begin{remark}\label{rem:rel_vs_abs}
One of the most confusing aspects of the theory might be the interplay between absolute and relative abelianizations. Let us explain this a bit. If~$f\colon X\to Y$ is a morphism in~$\bfT$, the pullback defines a functor
\[
f^*\colon\bfT_Y\longrightarrow\bfT_X
\]
that preserves limits. It maps abelian group objects to abelian group objects, so that we also have a functor
\[
\Ab(f^*)\colon\Ab(\bfT_Y)\longrightarrow\Ab(\bfT_X)
\]
that commutes with the forgetful functors. We deserve a diagram.
\[
\xymatrix@C=4em{
\bfT_X&\bfT_Y\ar[l]_-{f^*}\\
\Ab(\bfT_X)\ar[u]&\Ab(\bfT_Y)\ar[l]^{\Ab(f^*)}\ar[u]
}
\]
Both of the functors~$f^*$ and~$\Ab(f^*)$ have left adjoints, say~$f_*$ and~$\Ab(f_*)$. Typically, only the first one is given by composition with~$f$; the second one is only rarely given in this form. In any event, the left adjoints always commute with the abelianization functors as indicated in the following diagram.
\[
\xymatrix@C=4em{
\bfT_X\ar[r]^-{f_*}\ar[d]_{\Omega_X}&\bfT_Y\ar[d]^{\Omega_Y}\\
\Ab(\bfT_X)\ar[r]_{\Ab(f_*)}&\Ab(\bfT_Y)
}
\]
In particular, by evaluation at the identity~$\id_X$, thought of as the terminal object in the category~$\bfT_X$, we get isomorphisms
\begin{equation}\label{eq:rel}
\Omega_Y(X)\cong\Ab(f_*)\Omega_X(X).
\end{equation}
For this reason, it is common to concentrate on the~$X$--module~$\Omega_X(X)$ and refer to the other~$X$--modules~$\Omega_Y(X)$ and~$\Ab(f_*)$ only when needed. For instance, in the extreme case, if~$Y=\star$ is the terminal object in the category~$\bfT$, the relation~\eqref{eq:rel} reads
\begin{equation}\label{eq:determines}
\Omega(X)\cong\Ab(f_*)\Omega_X(X). 
\end{equation}
This correctly suggests that~$\Omega_X(X)$ is the better invariant than~$\Omega(X)$, and our focus will be on it from now on. We can also think of~$f$ as a morphism over~$Y$, and then it induces a homomorphism~$\Omega_Y(X)\to\Omega_Y(Y)$ of~$Y$--modules that can be translated into a homomorphism
\[
\Ab(f_*)\Omega_X(X)\longrightarrow\Omega_Y(Y)
\]
using~\eqref{eq:rel}. These are useful to have around once one commits to working with the~$\Omega_X(X)$ only. 
\end{remark}

\begin{example}
If~$\bfT=\bfG$ is the category of groups, and~$G$ is any group, then the~$G$--module~$\Omega_G(G)$ corresponds to the~$\bbZ G$--module~$\rmI G$, the augmentation ideal of the group ring. For a group homomorphism~$\phi\colon U\to G$ over~$G$, the base change~$\Ab(\phi_*)$ from~$U$--modules to~$G$--modules is given by~\hbox{$M\mapsto\bbZ G\otimes_{\bbZ U}M$}, so that the~$\bbZ G$--module~\hbox{$\bbZ G\otimes_{\bbZ U}\rmI U$} corresponds to the~$G$--module~$\Omega_G(U)$. In particular, we recover the isomorphisms
\[
G^\ab\cong\Omega(G)\cong\bbZ\otimes_{\bbZ G}\rmI G.
\]
Compare with Quillen's notes~\cite[II.5]{Quillen:HA} or Frankland's exposition~\cite[5.1]{Frankland}, for instance.
\end{example}

We will now turn our attention to Beck modules for the theories of racks and quandles, where it is, in general, no longer possible to describe these objects as modules over a ring.


\section{Rack and quandle modules}\label{sec:beck_quan}

Beck modules in the categories of racks and quandles have been studied by Jackson~\cite{Jackson}. The following result rephrases Theorem~2.2 in{\it~loc.cit.}.

\begin{proposition}\label{prop:Jackson}
If~$X$ is a rack, a rack module~$M$ over~$X$ is a family~$(\,M(x)\,|\,x\in X\,)$ of abelian groups together with homomorphisms
\[
M(x)
\overset{\epsilon(x,y)}{\xrightarrow{\hspace*{2em}}}
M(x\rhd y)
\overset{\alpha(x,y)}{\xleftarrow{\hspace*{2em}}}
M(y)
\]
for each pair~$(x,y)$ of elements, such that the three conditions~{\upshape (M1)}, {\upshape(M2)}, and~{\upshape(M3)} below are satisfied.
\end{proposition}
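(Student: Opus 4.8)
The plan is to unwind the definition of an abelian group object in the slice category~$\bfR_X$ by passing to fibres. First I would note that the underlying-set functor preserves limits, hence finite products, so it carries an abelian group object~$M$ in~$\bfR_X$ to an abelian group object in the category of sets over~$X$. By the analysis of the theory~$\bfT=\bfS$ in Section~\ref{sec:Beck}, the latter is precisely an~$X$--indexed family of abelian groups. Thus the underlying data of a rack module~$M$ over~$X$ is a family~$(\,M(x)\,|\,x\in X\,)$, where~$M(x)=p^{-1}(x)$ is the fibre of the structure morphism~$p\colon M\to X$, with zero element~$e(x)$ supplied by the unit~$e\colon X\to M$ and with addition supplied fibrewise by~$a$.

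The heart of the argument is to analyse the rack operation. Since~\hbox{$\rhd\colon M\times M\to M$} is a homomorphism of racks lying over~$\rhd$ on~$X$, it restricts to maps~\hbox{$M(x)\times M(y)\to M(x\rhd y)$}. The crucial observation is that the addition~\hbox{$a\colon M\times_X M\to M$} is itself a rack homomorphism, which gives the biadditivity identity
\[
(m+m')\rhd(n+n')=(m\rhd n)+(m'\rhd n')
\]
for~$m,m'\in M(x)$ and~$n,n'\in M(y)$, the two sides both landing in~$M(x\rhd y)$. Specialising one summand on each side to a zero element, I would then define
\[
\epsilon(x,y)(m)=m\rhd e(y),\qquad\alpha(x,y)(n)=e(x)\rhd n,
\]
and biadditivity shows at once that these are group homomorphisms and that~\hbox{$m\rhd n=\epsilon(x,y)(m)+\alpha(x,y)(n)$}. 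This is the relative analogue of the absolute formula~\eqref{eq:sum}.

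It then remains to extract the conditions~(M1),~(M2),~(M3) from the two rack axioms. The invertibility of the left multiplication by a fixed~$m$, combined with the fact that~\hbox{$y\mapsto x\rhd y$} is a bijection of~$X$, forces each~$\alpha(x,y)$ to be an isomorphism; this accounts for one of the three conditions. The self-distributive law~\hbox{$x\rhd(y\rhd z)=(x\rhd y)\rhd(x\rhd z)$}, after substituting the formula~\hbox{$m\rhd n=\epsilon(x,y)(m)+\alpha(x,y)(n)$} into both sides and separating the contributions of the three fibres~$M(x)$,~$M(y)$, and~$M(z)$, yields the remaining identities among the various composites of the~$\epsilon$'s and~$\alpha$'s. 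I expect the main obstacle to be exactly this bookkeeping: one must track carefully which fibre each element inhabits and how the indices on~$\epsilon$ and~$\alpha$ propagate as the self-distributive law is expanded, so that the three resulting relations are brought into precise agreement with~(M1)--(M3).

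For the converse direction, I would start from a family~$(\,M(x)\,)$ equipped with maps~$\epsilon(x,y)$ and~$\alpha(x,y)$ satisfying~(M1)--(M3), set~\hbox{$M=\coprod_{x\in X}M(x)$} with the evident structure morphism to~$X$, and define the rack operation by the displayed formula. The conditions~(M1)--(M3) are then exactly what is needed to verify that this operation satisfies the rack axioms, while the fibrewise abelian group structure provides the unit, inverse, and addition making~$M$ an abelian group object over~$X$. This direction is essentially a routine verification once the forward direction has pinned down the correspondence.
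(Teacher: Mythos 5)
Your outline is correct, but be aware that the paper itself offers no proof of this proposition: it is stated as a rephrasing of Jackson's Theorem~2.2, the verification is delegated to that reference, and only the converse construction is sketched later, in Remark~\ref{rem:extension}. What you propose is the standard direct unwinding, and it does go through. The passage to fibres is justified because the forgetful functor $\bfR_X\to\bfS_X$ preserves finite products and the terminal object, hence abelian group objects, and abelian group objects in $\bfS_X$ are exactly $X$--graded abelian groups (the paper says as much in Section~\ref{sec:Beck}); the unit $e\colon X\to M$ makes every fibre non-empty with distinguished zero $e(x)$. The bookkeeping you flag as the main obstacle resolves as follows: both sides of the self-distributivity identity $m\rhd(n\rhd p)=(m\rhd n)\rhd(m\rhd p)$, for $m\in M(x)$, $n\in M(y)$, $p\in M(z)$, are additive in each variable separately (by the biadditivity you established), so the identity is equivalent to its three specializations in which two of the entries are set to the zero sections. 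These give, respectively, the cocycle identity $\alpha(x,y\rhd z)\alpha(y,z)=\alpha(x\rhd y,x\rhd z)\alpha(x,z)$, which is the displayed part of~(M1); the exchange relation $\alpha(x,y\rhd z)\,\epsilon(y,z)=\epsilon(x\rhd y,x\rhd z)\,\alpha(x,y)$, which is the precise form of~(M2); and $\epsilon(x,y\rhd z)=\epsilon(x\rhd y,x\rhd z)\,\epsilon(x,y)+\alpha(x\rhd y,x\rhd z)\,\epsilon(x,z)$, which, after rearranging and suppressing indices, is the loosely stated~(M3) $\epsilon^2=(\id-\alpha)\epsilon$. Invertibility of $\alpha(x,y)$ is cleanest if you take $m=e(x)$: left multiplication by $e(x)$ is a bijection of $M$ covering the bijection $y\mapsto x\rhd y$ of $X$, so its fibre maps, which are exactly the $\alpha(x,y)$, are bijective; conversely, invertibility of all $\alpha(x,y)$ recovers bijectivity of every left multiplication, since on each fibre it is $\alpha(x,y)$ followed by a translation. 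Your converse direction is precisely the paper's Remark~\ref{rem:extension}, generalizing~\eqref{eq:sum} to~\eqref{eq:formula}, so nothing further is needed there.
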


\begin{itemize}[noitemsep,leftmargin=4em]
\item[(M1)] The homomorphisms~$\alpha(x,y)$ are invertible and satisfy
\[
\alpha(x,y\rhd z)\alpha(y,z)=\alpha(x\rhd y,x\rhd z)\alpha(x,z)
\]
for all~$x$,~$y$, and~$z$.

\item[(M2)] The~$\alpha$'s and~$\epsilon$'s commute whenever it makes sense.

\item[(M3)] We have~$\epsilon^2=(\id-\alpha)\epsilon$ whenever it makes sense.
\end{itemize}

\begin{remark}
It follows from the condition~(M1) that a rack module~$M$ comes with~(non-canonical) isomorphisms~$M(y)\cong M(z)$ whenever the elements~$y$ and~$z$ are in the same orbit.
\end{remark}

\begin{remark}
One might wonder if the situation can be described more concisely using the compositions~$\alpha^{-1}\epsilon$.
\end{remark}

The following result rephrases~\cite[Thm.~2.6]{Jackson}.

\begin{proposition}
If~$X$ is a quandle, a quandle module~$M$ over~$X$ is a rack module~$M$ such that, in addition to~{\upshape(M1)},~{\upshape(M2)}, and~{\upshape(M3)}, also the following condition~{\upshape(M4)} is satisfied.
\end{proposition}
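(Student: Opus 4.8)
The plan is to exploit that quandles sit inside racks as the full subcategory cut out by the single idempotency law $x\rhd x=x$, and to trace what this law becomes on the abelian group object that represents a module. Throughout I would keep $X$ fixed and compare the slice categories $\bfQ_X$ and $\bfR_X$.

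First I would check that the inclusion $\bfQ_X\hookrightarrow\bfR_X$ preserves finite products. The terminal object $\id_X$ is a quandle because $X$ is, and a binary product in either slice is a pullback over $X$, formed in racks as $M\times_X N$; if $M$ and $N$ are quandles then $(m,n)\rhd(m,n)=(m\rhd m,n\rhd n)=(m,n)$, so the pullback is again a quandle. Since the data and axioms in Definition~\ref{def:ag} are phrased entirely through finite products, it follows that the abelian group objects of $\bfQ_X$ are exactly those abelian group objects of $\bfR_X$ whose underlying object over $X$ is a quandle. This reduces the whole statement to one question: for which rack modules $M$ over $X$ is the representing rack a quandle?

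Next I would make that representing rack explicit. Writing $M$ in the form of Proposition~\ref{prop:Jackson}, the corresponding object of $\Ab(\bfR_X)$ has underlying total rack
\[
E=\{\,(x,m)\mid x\in X,\ m\in M(x)\,\},
\]
with projection $(x,m)\mapsto x$ and, as the fibrewise analogue of~\eqref{eq:sum}, operation
\[
(x,m)\rhd(y,n)=\bigl(\,x\rhd y,\ \epsilon(x,y)(m)+\alpha(x,y)(n)\,\bigr).
\]
That conditions~(M1)--(M3) are precisely the rack axioms for $E$ is exactly the content of Proposition~\ref{prop:Jackson}, which I would simply invoke.

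It then remains to impose the quandle law on $E$. Since $X$ is a quandle we have $x\rhd x=x$, so the self-application stays in the fibre over $x$ and the operation above gives
\[
(x,m)\rhd(x,m)=\bigl(\,x,\ \epsilon(x,x)(m)+\alpha(x,x)(m)\,\bigr).
\]
This equals $(x,m)$ for all $x$ and all $m\in M(x)$ if and only if $\epsilon(x,x)+\alpha(x,x)=\id_{M(x)}$ for every $x\in X$, and this last identity is the condition~(M4). It is the fibrewise counterpart of the relation $\epsilon=\id-\alpha$ that characterised abelian quandles in Proposition~\ref{prop:ab_quan}. The calculations are routine; the only step requiring genuine care---and the one I would treat as the main obstacle---is the product-preservation of $\bfQ_X\hookrightarrow\bfR_X$, since it is what guarantees that passing to quandles imposes no relation on the module data beyond the single fibrewise identity coming from $x\rhd x=x$.
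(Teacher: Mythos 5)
Your argument is correct. The paper itself offers no proof of this proposition --- it is stated as a rephrasing of Jackson's Theorem~2.6 and left at that --- so your proposal supplies a self-contained derivation where the paper relies on a citation. The route you take is the natural one and matches the logic implicit in the surrounding text: the full inclusion $\bfQ_X\hookrightarrow\bfR_X$ preserves the terminal object $\id_X$ and pullbacks over $X$ (a fibre product of quandles over a quandle is a quandle, since $(m,n)\rhd(m,n)=(m\rhd m,n\rhd n)$), so abelian group objects in $\bfQ_X$ are exactly the abelian group objects in $\bfR_X$ whose total object is a quandle; then the fibrewise formula $m\rhd n=\epsilon(x,y)m+\alpha(x,y)n$ from Remark~\ref{rem:extension}, evaluated at $(x,m)\rhd(x,m)$ with $x\rhd x=x$, turns the idempotency law into $\epsilon(x,x)+\alpha(x,x)=\id_{M(x)}$, which is precisely (M4), the fibrewise analogue of $\epsilon=\id-\alpha$ from Proposition~\ref{prop:ab_quan}. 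Your identification of the product-preservation of the inclusion as the one point needing care is apt: it is what guarantees that no further conditions beyond (M4) are imposed. The only dependency you should make explicit is that you are using the full strength of Proposition~\ref{prop:Jackson} (i.e., Jackson's Theorem~2.2) to know that (M1)--(M3) are \emph{exactly} equivalent to the rack-object axioms on the total space $E$, so that the quandle case genuinely reduces to adding the single identity coming from $x\rhd x=x$.
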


\begin{itemize}[noitemsep,leftmargin=4em]
\item[(M4)] We have
\[
\epsilon(x,x)=\id_{M(x)}-\alpha(x,x)
\]
as endomorphisms of~$M(x)$ for all~$x$ in~$X$.
\end{itemize}

\begin{examples}
Given any abelian rack or quandle~$A$ in the sense of Section~\ref{sec:abel_quan}, thought of as a module over the terminal object, its pullback~$X\times A$ is a~`constant' Beck module over~$X$. These modules are `trivial' if, in addition, we have~$\alpha=\id$ and~$\epsilon=0$. More generally, the condition~$\alpha=\id$ only forces~$\epsilon^2=0$, and these modules may be called~`differential.' At the opposite extreme, if we have~$\epsilon=0$, then we are left with the automorphism~$\alpha$, and an `automorphic' module.
\end{examples}

\begin{remark}\label{rem:extension}
If~$M$ is a rack module over a rack~$X$, then the disjoint union of the family~$(\,M(x)\,|\,x\in X\,)$ can be turned into a rack such that the canonical map to~$X$ is a rack morphism. A formula for the operation is given by
\begin{equation}\label{eq:formula}
m\rhd n=\epsilon(x,y)m+\alpha(x,y)n,
\end{equation}
when~$m\in M(x)$ and~$n\in M(y)$, generalizing~\eqref{eq:sum}. If~$X$ is a quandle, and~$M$ is a quandle module over it, then this construction will give a quandle. We will denote the resulting object over~$X$ by~$M$ again. This is analogous to Example~\ref{ex:Gmod}, and it provides one equivalence in Proposition~\ref{prop:Jackson}.
\end{remark}

\begin{remark}
If~$X$ is a rack (or a quandle), and~$N$ is an~$X$--module (in the appropriate sense), then an~$X$--derivation from~$X$ into~$N$ is just a section~$\nu\colon X\to N$ of the morphisms~\hbox{$N\to X$}. In other words, it picks out a family~$(\,\nu(x)\in N(x)\,|\,x\in X\,)$ of elements such that
\[
\nu(x\rhd y)=\epsilon(x,y)\nu(x)+\alpha(x,y)\nu(y)
\]
holds. The right hand side equals~$\nu(x)\rhd\nu(y)$ by~\eqref{eq:formula}. Of course, if~$N$ is constant, this formula simplifies to~$\nu(x\rhd y)=\epsilon\nu(x)+\alpha\nu(y)$. More generally we have~$X$--derivations from~$Y$ into~$N$, where~$Y$ is any rack (or quandle) over~$X$.
\end{remark}


\section{Alexander--Beck modules of knots}\label{sec:beck_knot}

We are now ready to apply the general theory of quandle modules to the fundamental quandles of knots. 

\begin{definition}\label{def:AB_K}
Let~$K$ be a knot with knot quandle~$\rmQ K$. The {\em Alexander--Beck module} of~$K$ is the~$\rmQ K$--module~$\Omega_{\rmQ K}(\rmQ K)$.
\end{definition}

We can find a presentation of the Alexander--Beck module of a knot from any diagram of the knot.

\begin{proposition}
Let~$K$ be a knot. For any diagram, the Alexander--Beck $\rmQ K$--module~$\Omega_{\rmQ K}(\rmQ K)$ is a cokernel of the homomorphism
\[
\Omega_{\rmQ K}(\rmF\bfQ\{\text{\upshape arcs}\})\longleftarrow\Omega_{\rmQ K}(\rmF\bfQ\{\text{\upshape crossings}\})
\]
between free~$\rmQ K$--modules that sends a crossing as in Figure~\ref{fig:crossing} to the element
\begin{equation}\label{eq:relation}
\epsilon(x,y)x+\alpha(x,y)y-z
\end{equation}
in~$\Omega_{\rmQ K}(\rmF\bfQ\{\text{\upshape arcs}\})(z)$.
\end{proposition}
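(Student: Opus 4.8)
The plan is to specialize the presentation result of Section~\ref{sec:Beck} to the coequalizer~\eqref{eq:presQ} of the knot quandle, taking $X=Y=\rmQ K$. Since $\Ab(\bfQ_{\rmQ K})$ is abelian and $\Omega_{\rmQ K}$ is a left adjoint (hence preserves the coequalizer), that proposition immediately presents $\Omega_{\rmQ K}(\rmQ K)$ as the cokernel of the difference of the two induced maps
\[
\Omega_{\rmQ K}(\rmF\bfQ\{\text{arcs}\})\longleftarrow\Omega_{\rmQ K}(\rmF\bfQ\{\text{crossings}\})
\]
of free $\rmQ K$-modules. The only real work is therefore to compute these two induced maps on generators.

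First I would recall from~\eqref{eq:presQ} that the two parallel arrows send a crossing generator $c$ with arcs $x,y,z$ as in Figure~\ref{fig:crossing} to $x\rhd y$ and to $z$ in $\rmF\bfQ\{\text{arcs}\}$, respectively. Writing $d\colon\rmF\bfQ\{\text{arcs}\}\to\Omega_{\rmQ K}(\rmF\bfQ\{\text{arcs}\})$ for the universal $\rmQ K$-derivation and using its naturality, the generator $c$ is carried by the first induced map to $d(x\rhd y)$ and by the second to $d(z)$, the latter being simply the generator $z$.

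Next I would evaluate $d(x\rhd y)$ by the formula for $\rmQ K$-derivations established in Section~\ref{sec:beck_quan}: every such derivation $\nu$ satisfies $\nu(x\rhd y)=\epsilon(x,y)\nu(x)+\alpha(x,y)\nu(y)$. Applied to $d$, this gives $d(x\rhd y)=\epsilon(x,y)x+\alpha(x,y)y$ in terms of the free generators $x$ and $y$. Taking the difference, the generator $c$ maps to $\epsilon(x,y)x+\alpha(x,y)y-z$, which is the asserted element.

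The step most in need of care---and the main obstacle---is the grading bookkeeping. Under the structure morphism each arc maps to its class in $\rmQ K$, where the crossing relation reads $x\rhd y=z$. The homomorphism $\epsilon(x,y)$ sends the fiber over $x$ into the fiber over $x\rhd y=z$, and $\alpha(x,y)$ sends the fiber over $y$ into that over $z$, so all three terms $\epsilon(x,y)x$, $\alpha(x,y)y$, and $z$ genuinely lie in the single fiber $\Omega_{\rmQ K}(\rmF\bfQ\{\text{arcs}\})(z)$, as claimed. Once this is verified, the statement is the formal transport of the coequalizer through the left adjoint $\Omega_{\rmQ K}$.
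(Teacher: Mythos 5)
Your proposal is correct and follows essentially the same route as the paper, whose proof consists only of the observation that the coequalizer presentation~\eqref{eq:presQ} is preserved by the left adjoint~$\Omega_{\rmQ K}$. The extra work you do---computing the two induced maps on generators via the universal derivation formula $\nu(x\rhd y)=\epsilon(x,y)\nu(x)+\alpha(x,y)\nu(y)$ and checking that all three terms lie in the fiber over~$z$---accurately fills in details the paper leaves implicit.
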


Using the rack structure on~$\rmQ K$--modules from Remark~\ref{rem:extension}, the relation~\eqref{eq:relation} can be written~$x\rhd y=z$, of course.

\begin{proof}
Recall that the knot quandle~$\rmQ K$ has a presentation as a coequalizer
\[
\xymatrix@1{
\rmQ K
&\rmF\bfQ\{\text{arcs}\}\ar[l]
&\rmF\bfQ\{\text{crossings}\}.\ar@<+.4ex>[l]\ar@<-.4ex>[l]
}
\]
The (relative) abelianization functor~$\Omega_{\rmQ K}$ is a left-adjoint, and it therefore preserves colimits such as coequalizers. 
\end{proof}


\begin{proposition}\label{prop:determines}
The Alexander--Beck module of a knot determines its classical Alexander module.
\end{proposition}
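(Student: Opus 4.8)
The plan is to read off the statement directly from the relative-versus-absolute comparison already set up in Remark~\ref{rem:rel_vs_abs}, specialized to the terminal object $Y=\star$ in the category $\bfQ$ of quandles. Recall that Proposition~\ref{prop:classical} identifies the classical (extended) Alexander module of $K$ with the absolute abelianization $\Omega(\rmQ K)$ of the knot quandle. So it suffices to show that $\Omega(\rmQ K)$ is recovered functorially from the Alexander--Beck module $\Omega_{\rmQ K}(\rmQ K)$. This is exactly what the chain of comparison isomorphisms provides.

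First I would let $f\colon\rmQ K\to\star$ be the unique morphism to the terminal quandle. By the formula~\eqref{eq:determines}, which is the specialization of~\eqref{eq:rel} to $Y=\star$, we have a natural isomorphism
\[
\Omega(\rmQ K)\cong\Ab(f_*)\,\Omega_{\rmQ K}(\rmQ K),
\]
where $\Ab(f_*)\colon\Ab(\bfQ_{\rmQ K})\to\Ab(\bfQ_\star)$ is the left adjoint to the pullback functor $\Ab(f^*)$ on categories of Beck modules. Since $\star$ is terminal, $\bfQ_\star\simeq\bfQ$ and $\Ab(\bfQ_\star)\simeq\Ab(\bfQ)$, which by Proposition~\ref{prop:ab_quan} is just the category of $\bbZ[\rmA^{\pm1}]$--modules. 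Thus $\Ab(f_*)$ is a fixed, knot-independent base-change functor from $\rmQ K$--modules to $\bbZ[\rmA^{\pm1}]$--modules, and applying it to the Alexander--Beck module yields precisely $\Omega(\rmQ K)$, the extended Alexander module.

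The second step is to pass from the extended Alexander module back to the classical Alexander module itself. As noted in Section~\ref{sec:abel_knot}, the extended module is $\bbZ[\rmA^{\pm1}]\oplus M$ where $M$ is the classical Alexander module; the extra free summand of rank~$1$ is canonical, arising (as explained in the final remark of that section) from the retraction $\rmQ K\to\rmF\bfQ_1=\star$ onto the free quandle of rank one. So $M$ is recovered as the cokernel of the induced split inclusion $\Omega(\rmF\bfQ_1)\hookrightarrow\Omega(\rmQ K)$, a construction that again depends only on the Alexander--Beck module through the functorial output of $\Ab(f_*)$.

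\emph{The main obstacle} is not any computation but the clean identification of the base-change functor $\Ab(f_*)$ and the verification that the comparison isomorphism~\eqref{eq:determines} is genuinely natural in $K$, so that ``determines'' means a well-defined functorial recipe rather than a mere abstract isomorphism of objects. Concretely, one must check that the left adjoint $\Ab(f_*)$ to $\Ab(f^*)$ exists and is computed correctly in the quandle setting, using that $\Omega_{\rmQ K}$ preserves the coequalizer presentation~\eqref{eq:presQ}; granting the general machinery of Remark~\ref{rem:rel_vs_abs}, this is immediate, and the remainder is bookkeeping. I would therefore keep the argument short, invoking~\eqref{eq:determines} together with Propositions~\ref{prop:ab_quan} and~\ref{prop:classical}, and splitting off the canonical free summand to land on the classical Alexander module.
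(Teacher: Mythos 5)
Your proposal is correct and follows essentially the same route as the paper: invoke the comparison isomorphism~\eqref{eq:determines} for the unique morphism $\rmQ K\to\star$ to recover $\Omega(\rmQ K)$ from $\Omega_{\rmQ K}(\rmQ K)$, identify this with the extended Alexander module via Proposition~\ref{prop:classical}, and split off the canonical free rank-one summand. The extra detail you supply on identifying $\Ab(\bfQ_\star)$ with $\bbZ[\rmA^{\pm1}]$--modules and on naturality is consistent with, and slightly more explicit than, the paper's brief argument.
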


\begin{proof}
According to~\eqref{eq:determines}, the absolute abelianization~$\Omega(Q)$ of a quandle~$Q$ is determined by the relative abelianization~$\Omega_Q(Q)$ as the pushforward along the unique morphism~$Q\to\star$ to the terminal object. For~$Q=\rmQ K$ t®his means that the Alexander--Beck module~$\Omega_{\rmQ K}(\rmQ K)$ determines the~$\bbZ[\rmA^{\pm1}]$--module~$\Omega(\rmQ K)$. According to Proposition~\ref{prop:classical}, the latter is isomorphic to the extended Alexander module of the knot, and the classical Alexander module is canonically a retract of it.
\end{proof}

In Example~\ref{ex:Conway}, we will see that the converse to the statement in Proposition~\ref{prop:determines} does not hold: the Alexander--Beck module of a knot is a better invariant than its classical Alexander module. This statement is justified by our main result:


\begin{theorem}\label{thm:main}
A knot is trivial if and only its Alexander--Beck module is a free module over its knot quandle.
\end{theorem}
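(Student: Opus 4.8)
The plan is to prove both directions, treating the easy direction first and then confronting the genuine content in the converse.

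\textbf{The trivial knot gives a free module.} If $K$ is the unknot $U$, then by the earlier Example its knot quandle $\rmQ U$ is the terminal quandle $\star$ (the free quandle on one generator). The Alexander--Beck module is then $\Omega_{\star}(\star)$, the relative abelianization of the terminal object over itself. I would compute this directly: the terminal quandle admits the presentation with one arc and no crossings, so by the presentation Proposition of Section~\ref{sec:beck_knot} the module $\Omega_{\star}(\star)$ is the cokernel of a map out of the zero module, hence the free $\star$--module on one generator. A module over the one-element quandle is just a single abelian group equipped with the operators $\alpha,\epsilon$ subject to (M1)--(M4), which recovers an abelian quandle over $\bbZ\bfQ=\bbZ[\rmA^{\pm1}]$; the free one of rank one is therefore $\bbZ[\rmA^{\pm1}]$ itself, which is free. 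So freeness holds for the unknot.

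\textbf{The converse is the hard part.} Here I must show that if $\Omega_{\rmQ K}(\rmQ K)$ is free, then $K$ is trivial. The natural strategy is to leverage that the knot quandle is a \emph{complete} invariant (Joyce--Matveev): it suffices to show that freeness of the Alexander--Beck module forces $\rmQ K\cong\star$, the quandle of the unknot. The bridge to a classical, already-known unknot-detector is Proposition~\ref{prop:determines} together with Papakyriakopoulos' theorem: the knot group $\pi K$ detects the unknot, being infinite cyclic exactly for the trivial knot. So the goal becomes to extract from freeness of $\Omega_{\rmQ K}(\rmQ K)$ enough information to conclude that $\pi K$ is abelian. I expect the key to be a refinement of the relation \eqref{eq:determines}, namely $\Omega(\rmQ K)\cong\Ab(f_*)\Omega_{\rmQ K}(\rmQ K)$, pushed forward along $\rmQ K\to\star$, combined with the homomorphism $\Ab(f_*)\Omega_{\rmQ K}(\rmQ K)\to\Omega_\star(\star)$ of Remark~\ref{rem:rel_vs_abs}. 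Freeness should make this comparison map behave as cleanly as for the unknot, forcing the extended Alexander module, and more, to match that of the trivial knot.

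\textbf{Main obstacle.} The crux is that the classical Alexander module \emph{alone} does not detect the unknot (this is the whole point of the paper, and Example~\ref{ex:Conway} shows the converse to Proposition~\ref{prop:determines} fails). Therefore I cannot merely pass to $\Omega(\rmQ K)$ and invoke Alexander-polynomial arguments; the freeness hypothesis must be exploited at the level of the Beck module, where the fiberwise structure $(M(x),\alpha(x,y),\epsilon(x,y))$ over each arc retains strictly more data than its global pushforward. The delicate step will be to argue that a free $\rmQ K$--module of the appropriate rank can only arise when the orbit structure and the $\alpha,\epsilon$ operators degenerate to the single-orbit, rank-one situation of $\star$. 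Concretely, I would analyze the universal derivation $\rmQ K\to\Omega_{\rmQ K}(\rmQ K)$ and show that freeness forces this derivation, hence the defining relations coming from the crossings in any diagram, to collapse so that every arc is identified; equivalently, that the presentation matrix over the quandle ring becomes invertible only in the trivial case. Establishing that freeness of the $\rmQ K$--module implies $\rmQ K$ has a single element (so $\pi K$ is infinite cyclic and $K$ is unknotted) is where the real work lies, and I would expect to need a careful count of generators and relations under the Beck-module presentation, together with the fact that a nontrivial knot group is never abelian.
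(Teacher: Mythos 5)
Your easy direction is fine and matches the paper. But the converse, as you yourself flag, is left as a plan rather than a proof, and the plan points in a direction that does not work: you propose to push the freeness hypothesis down to the terminal object (recovering only the extended Alexander module, which you correctly note cannot suffice) and then to recover the missing strength by ``a careful count of generators and relations'' in the Beck-module presentation, hoping to show that the presentation matrix is invertible only when $\rmQ K$ is a singleton. A rank count cannot do this: any knot diagram has equally many arcs and crossings, so the free modules on either side of the presentation have the same rank for every knot, and freeness of a cokernel of a square presentation is exactly the kind of statement that is not detectable by counting. Nothing in your outline explains how the fiberwise data $(M(x),\alpha(x,y),\epsilon(x,y))$ would be exploited to rule out, say, a stably free but nontrivial situation.

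The paper's proof supplies the missing idea by pushing forward not to the terminal object but to the knot \emph{group}. The adjunction $\Phi_*\colon\bfQ\rightleftarrows\bfG$ induces an adjunction on slice categories over $\rmQ K$ and $\pi K$ and then on their categories of abelian group objects, giving a left adjoint $\Psi_*\colon\Ab(\bfQ_{\rmQ K})\to\Ab(\bfG_{\pi K})$ with $\Psi_*\Omega_{\rmQ K}(\rmQ K)\cong\Omega_{\pi K}(\pi K)$, and one checks (by composing adjunctions) that $\Psi_*$ preserves free objects. Since $\Omega_{\pi K}(\pi K)$ corresponds to the augmentation ideal $\rmI\pi K$ as a $\bbZ\pi K$--module, freeness of the Alexander--Beck module forces $\rmI\pi K$ to be free, so $0\leftarrow\bbZ\leftarrow\bbZ\pi K\leftarrow\rmI\pi K\leftarrow 0$ is a free resolution and $\pi K$ has cohomological dimension one. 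The Stallings--Swan theorem then makes $\pi K$ free, hence infinite cyclic (its abelianization is $\bbZ$), and Papakyriakopoulos finishes. This intermediate station at the group level, together with Stallings--Swan, is precisely the nontrivial input your proposal lacks, and without it the argument does not close.
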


\begin{proof}
One direction is easy: if~$K=U$ is the unknot, then~$\rmQ U=\rmF\bfQ_1$ is a free quandle on one generator. This is the singleton with the unique quandle structure, which is also the terminal object in the category~$\bfQ$ of quandles. It follows that there is no difference between the absolute and the relative abelianization, and we get
\[
\Omega_{\rmQ U}(\rmQ U)=\Omega(\rmQ U)=\Omega(\rmF\bfQ_1),
\]
and this corresponds to the free~$\bbZ\bfQ$--module of rank~$1$ as we have seen in Section~\ref{sec:Beck}.

As for the other direction: let~$K$ be a knot such that the~$\rmQ K$--module $\Omega_{\rmQ U}(\rmQ U)$ is free. The left adjoint~$\Phi_*$ in the adjunction
\begin{equation}\label{eq:adj1}
\xymatrix@1{
\Phi_*\colon\bfQ\ar@<+.4ex>[r]
&
\bfG\colon\Phi^*\ar@<+.4ex>[l]
}
\end{equation}
between the category~$\bfQ$ of quandles and the category~$\bfG$ of groups sends the knot quandle~$\rmQ K$ to the knot group~$\pi K=\Phi_*\rmQ K$. This adjunction induces an adjunction
\[
\xymatrix@1{
\Phi_*\colon\bfQ_{\rmQ K}\ar@<+.4ex>[r]
&
\bfG_{\pi K}\colon\Psi^*\ar@<+.4ex>[l]
}
\]
between the slice categories. See~\cite[Prop.~4.1]{Frankland}. The left adjoint~$\Phi_*$ sends an arrow~\hbox{$P\to\rmQ K$} to the arrow~$\Phi_*P\to\Phi_*\rmQ K=\pi K$ induced by~$\Phi_*$, justifying the notation. The right adjoint~$\Psi^*$ is the composition of~$\Phi^*$ with the pullback functor along the unit
\[
u\colon \rmQ K\longrightarrow\Phi^*\Phi_*\rmQ K=\Phi^*\pi K
\]
of the adjunction~\eqref{eq:adj1} at~$\rmQ K$,
\[
\xymatrix{
\Psi^*G\ar[r]\ar[d]&\Phi^*G\ar[d]\\
\rmQ K\ar[r]&\Phi^*\pi K.
}
\]
This right adjoint~$\Psi^*$ restricts to a functor between abelian group objects, and that functor has a left adjoint~$\Psi_*$,
\begin{equation}\label{eq:adj3}
\xymatrix@1{
\Psi_*\colon\Ab(\bfQ_{\rmQ K})\ar@<+.4ex>[r]
&
\Ab(\bfG_{\pi K})\colon\Psi^*\ar@<+.4ex>[l]
}.
\end{equation}
See~\cite[Prop.~4.2]{Frankland}. We get a commutative diagram
\[
\xymatrix@C=4em{
\bfQ_{\rmQ K} & \bfG_{\pi K}\ar[l]_-{\Psi^*}\\
\Ab(\bfQ_{\rmQ K})\ar[u]^R & \Ab(\bfG_{\pi K})\ar[l]^-{\Psi^*}\ar[u]_R}
\]
of right adjoints, where~$R$ denotes the forgetful functor, so that the diagram
\[
\xymatrix@C=4em{
\bfQ_{\rmQ K}\ar[r]^-{\Phi_*}\ar[d]_{\Omega_{\rmQ K}} & \bfG_{\pi K}\ar[d]^{\Omega_{\pi K}}\\
\Ab(\bfQ_{\rmQ K})\ar[r]_-{\Psi_*} & \Ab(\bfG_{\pi K})
}
\]
of left adjoints also commutes. Evaluated at the identity~$\rmQ K=\rmQ K$, this proves
\[
\Psi_*\Omega_{\rmQ K}(\rmQ K)=\Omega_{\pi K}(\pi K).
\]
If the left adjoint~$\Psi_*$ preserves free objects, then our assumption implies that the~$\pi K$--module~$\Omega_{\pi K}(\pi K)$ is free. Let us assume for a moment that this is the case, to see that this allows us to finish the proof as follows: under the equivalence between the category of~$\pi K$--modules and the category of~$\bbZ\pi K$--modules, this module corresponds to the~$\bbZ\pi K$--module~$\rmI\pi K$, the augmentation ideal in the group ring~$\bbZ\pi K$. It follows that~$\rmI\pi K$ is a free~$\bbZ\pi K$--module. Then the defining exact sequence
\[
0\longleftarrow\bbZ\longleftarrow\bbZ\pi K\longleftarrow\rmI\pi K\longleftarrow0
\]
is a free resolution of the trivial~$\bbZ\pi K$--module~$\bbZ$. It follows that the knot group~$\pi K$ has cohomological dimension~$1$. By the characterization of groups of cohomological dimension~$1$, due to Stallings~\cite{Stallings} and Swan~\cite{Swan}, the knot group~$\pi K$ is free, and therefore necessarily cyclic. By~\cite{Papakyriakopoulos} again, the knot~$K$ is trivial, as desired.

It remains to be seen that the left adjoint~$\Psi_*$ preserves free objects. To do so, let us choose a free module~$R$ over~$Q=\rmQ K$ with basis a set~$S\to Q$ over~$Q$, so that there is an adjunction bijection
\begin{equation}\label{eq:adj2}
\Hom_{\Ab(\bfQ_Q)}(R,M)\cong\bfS_Q(S,M)
\end{equation}
for all~$Q$--modules~$M$. We need to show that the image~$\Psi_*R$ is a free module over the group~$\pi=\pi K$. So we compute
\[
\Hom_{\Ab(\bfG_\pi)}(\Psi_*(R),N)\cong\Hom_{\Ab(\bfQ_Q)}(R,\Psi^*(N))\cong\bfS_Q(S,\Psi^*(N)),
\]
the first by the adjunction~\eqref{eq:adj3}, and the second by the adjunction~\eqref{eq:adj2}. We are done if the right hand side is also naturally isomorphic to~$\bfS_\pi(L(S),N)$ for some functor~$L$ that takes sets over~$Q$ to sets over~$\pi$. In other words, we need to know that the composition
\[
\Ab(\bfG_\pi)\overset{\Psi^*}{\longrightarrow}\Ab(\bfQ_Q)\overset{R}\longrightarrow\bfS_Q
\]
has a left adjoint, and that is clear: it is a composition of right adjoints, so that the composition of their left adjoints is the left adjoint.
\end{proof}

\begin{remark}
Theorem~\ref{thm:main} may remind some readers of Eisermann's characterization~\cite{Eisermann} of the unknot. However, the similarities are limited to the extent that both results give invariants that detect the unknot, and that both of these invariants are constructed starting from quandles. The arguments are entirely different: we do the algebra, whereas for Eisermann the passage from closed knots to open knots is essential. Also, his result is homological in nature, and he uses higher quandle cohomology with values in abelian groups. This present paper does not involve homology, and our unknot-detector takes values in the somewhat more elaborate Beck modules for quandles. This structure is lost when passing from a quandle to its cohomology groups. It appears rather more likely that the present result, when embedded into a suitable derived context~\cite{Szymik:Q=Q}, implies Eisermann's, but so far this has no been confirmed.
\end{remark}

\begin{example}\label{ex:Conway}
The Alexander polynomial is trivial for many knots. Examples include
the Seifert knot~\cite{Seifert}, 
all untwisted Whitehead doubles~\cite{Whitehead} of knots,
the Kinoshita--Terasaka knot~\cite{Kinoshita+Terasaka}, and
the Conway knot~\cite{Conway}.~(See also~\cite{Garoufalidis+Teichner}.) 
On the other hand, these knots are all non-trivial, and their Alexander--Beck modules are not free~(Theorem~\ref{thm:main}). 
The Alexander--Beck module is therefore a stronger invariant than the plain Alexander module.
\end{example}


\section*{Acknowledgments}

This research owes a lot to a stay at the Hausdorff Institute for Mathematics.~I thank Scott Carter, Martin Frankland, Mike Hill, Haynes Miller, Mark Powell, and Peter Teichner for stimulating discussions on that and other occasions, and the referee for their careful reading and comments.



\vfill

\parbox{\linewidth}{%
Department of Mathematical Sciences\\
NTNU Norwegian University of Science and Technology\\
7491 Trondheim\\
NORWAY\\
\phantom{ }\\
\href{mailto:markus.szymik@ntnu.no}{markus.szymik@ntnu.no}\\
\href{https://folk.ntnu.no/markussz}{folk.ntnu.no/markussz}
}


\end{document}